\pgfplotsset{compat=1.5}
\renewcommand{\P}{\mathbb{P}}
\newcommand{\E}{\mathbb{E}}
\newcommand{\N}{\mathbb{N}}
\newcommand{\Z}{\mathbb{Z}}
\newcommand{\vmax}{v_{\max}}
\newcommand{\vmin}{v_{\min}}
\newcommand{\limn}{\lim_{n \to \infty}}
\theoremstyle{plain}
\newtheorem{thm}{Theorem}[section]
\newtheorem{lemma}[thm]{Lemma}
\newtheorem{con}[thm]{Conjecture}
\newcommand{\Addresses}{{%
  \bigskip
  \footnotesize

  T.~Höfelsauer, \textsc{Fakultät für Mathematik, Technische Universität München,
    \newline Boltzmannstr. 3, 85748 Garching, Germany}\par\nopagebreak
  \textit{E-mail address:} \texttt{thomas.hoefelsauer@tum.de}

  \medskip

  F.~Weidner, \textsc{Fakultät für Mathematik, Technische Universität München, \newline
    Boltzmannstr. 3, 85748 Garching, Germany}\par\nopagebreak
  \textit{E-mail address:} \texttt{felizitas.weidner@tum.de}

}}
\begin{document}
\title{The speed of frogs with drift on $\Z$}
\author{Thomas Höfelsauer and Felizitas Weidner}
\maketitle


\begin{abstract}
In this article we consider the frog model with drift on $\Z$ and investigate the behaviour of the cloud of the frogs. In particular, we show that the speed of the minimum equals the speed of a single frog and prove some properties of the speed of the maximum. In addition, we show a limit theorem for the empirical distribution.

\smallskip
\noindent \textbf{Keywords:} frog model, interacting random walks

\smallskip
\noindent \textbf{AMS 2000 subject classification:} primary 60J10, 60K35; secondary 60J80
\end{abstract}


\section{Description of the model and results}
We consider a system of interacting random walks, known as the frog model, on the one-dimensional lattice $\Z$. In this model the particles performing the random walks are thought of as frogs. Initially, there is one active frog at the origin and one sleeping frog at every other site. The active frog performs a discrete time simple random walk jumping to the right with probability~$p$ and to the left with probability $1-p$ where $p \geq \frac12$. Whenever a sleeping frog is visited by an active frog, it is activated and starts a simple random walk itself with the same drift parameter $p$, independently of all other active frogs. It can wake up other sleeping frogs as well.

Gantert and Schmidt prove in \cite{GS09} that the frog model as described here is transient almost surely for every $p > \frac12$. This means that the origin is visited only finitely many times by active frogs. Indeed, they consider a more general model with a random initial configuration of sleeping frogs and give criteria for recurrence and transience.
In \cite{AMP02} and \cite{AMPR01} Alves et al.~prove a shape theorem in $\Z^d$ when the underlying random walk is symmetric. Transience and recurrence for frogs on $\Z^d$ is discussed in \cite{TW99}, \cite{P01} and \cite{DP14}, for frogs on trees in \cite{HJJ14} and \cite{HJJ15}, and in a more general setting in \cite{KZ15}. Other aspects are discussed in \cite{AMP02}, \cite{LMP05} and \cite{GNR15}. An overview and a collection of problems up to the year 2003 can be found in~\cite{P03}.

\enlargethispage{\baselineskip}

In this article we investigate how the set of sites occupied by active frogs evolves over time. In particular, we consider the left and right front of this set and study their speed.
In continuous time the right front is studied in \cite{CQR09} and \cite{BR10}.

Let us introduce the model in a more formal way: Let $(X_k^i)_{i\in \Z, k\in\N}$ be an array of independent and identically distributed random variables such that $\P(X_1^0=1)=p=1-\P(X_1^0=-1)$. For $i \in \Z, n \in \N_0$ define $S_n^i = \sum_{k=1}^n X_k^i$. The sequence $(S_n^i)_{n \in \N_0}$ describes the trajectory of the frog initially at site~$i$. It starts to follow that trajectory once it is woken up, which is a random time depending on the behaviour of the frogs activated earlier. We denote this activation time of frog $i$ by $T_i$. A formal definition can be found in \cite{AMP02}. The location $Z_n^i$ of the frog initially at site $i \in \Z$ at time $n \in \N_0$ is given by
\begin{equation*}
Z_n^i = 
\begin{cases}
i		& \text{for $n<T_i$},\\
i+S_{n-T_i}^i	& \text{for $n\geq T_i$}. 
\end{cases}
\end{equation*}
Let $A_n$ denote the set of active frogs at time $n$, i.e.~$A_n=\{i \in \Z \colon T_i \leq n\}$. Further, we define $M_n=\max_{i \in A_n} Z_n^i$ and $m_n=\min_{i \in A_n} Z_n^i$. Thus, $M_n$ describes the maximum and $m_n$ the minimum of the locations of the active frogs. We refer to $M_n$ and $m_n$ as the maximum and the minimum.
One can show that there are non-zero constants $\vmax$ and $\vmin$ such that
\begin{align*}
\vmax &= \lim_{n\to \infty} \frac{M_n}{n} \quad \text{a.s.}\\
\vmin &= \lim_{n\to \infty} \frac{m_n}{n} \quad \text{a.s.}
\end{align*}
The existence of $\vmax$ is well known and is discussed in Section~\ref{chapter_proofs}, and the existence of $\vmin$ is part of Theorem~\ref{theorem_speed} below.
We call $\vmax$ the speed of the maximum and $\vmin$ the speed of the minimum. 
We study $\vmax$ and $\vmin$ as functions of the drift parameter $p$. 
First, we show that the speed of the minimum equals the speed of a single frog.

\begin{thm} \label{theorem_speed}
For $p > \frac12$ the speed of the minimum exists and is given by
\begin{equation*}
\vmin = 2 p -1.
\end{equation*}
\end{thm}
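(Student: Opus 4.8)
The plan is to prove separately that $\limsup_{n\to\infty} m_n/n \le 2p-1$ and $\liminf_{n\to\infty} m_n/n \ge 2p-1$ almost surely; together these yield both the existence of $\vmin$ and its value. The upper bound is immediate, since the frog started at the origin is active at all times: $m_n \le Z_n^0 = S_n^0$, and $S_n^0/n \to \E[X_1^0] = 2p-1$ a.s.\ by the law of large numbers. The real content is the matching lower bound, and the naive estimate $Z_n^i \ge i + \min_{k\ge0} S_k^i$ is too weak, as it only gives $\liminf m_n/n \ge 0$.

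The key structural input is the transience of the model from \cite{GS09}: almost surely only finitely many sites to the left of the origin are ever visited. Since all walks are nearest-neighbour, $A_n$ is an interval $\{\ell_n,\dots,r_n\}$ with $\ell_n\le0\le r_n$, $\ell_{n+1}\ge\ell_n-1$ and $r_{n+1}\le r_n+1$; transience forces the non-increasing sequence $\ell_n$ to be eventually constant, say $\ell_n=\ell$ for $n\ge N_0$ with $N_0$ a.s.\ finite. Hence $A_n=\mathcal F\cup\{r_{N_0}+1,\dots,r_n\}$ with $\mathcal F=\{\ell,\dots,r_{N_0}\}$ an eventually fixed, a.s.\ finite set, and every active frog $i\notin\mathcal F$ was activated exactly when its site became the right frontier, i.e.\ $i=r_{T_i}$ with $T_i>N_0$.

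For the finitely many $i\in\mathcal F$ we have $T_i<\infty$, so $Z_n^i/n=(i+S^i_{n-T_i})/n\to2p-1$. For a frog $i=r_{T_i}\notin\mathcal F$, put $s=n-T_i$ so that $Z_n^i=r_{n-s}+S^i_s$. I would then fix $\epsilon>0$ and show that almost surely, for all large $n$: (i) $r_m\ge(2p-1-\epsilon)m$ for all large $m$, from $r_m\ge S_m^0$ and the law of large numbers; (ii) $S^i_s\ge(2p-1-\epsilon)s$ simultaneously for all $\epsilon n\le s\le n$ and all $|i|\le n$, by a union bound over $O(n^2)$ events of exponentially small probability (Cramér); and (iii) $\min_{k\ge0}S^i_k\ge-C\log n$ simultaneously for all $|i|\le n$, by a union bound using the exponential tail of the all-time minimum of a random walk with positive drift. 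A short case distinction on whether $s$ or $n-s$ is at least $\epsilon n$ — using $r_{n-s}\ge0$ when $n-s$ is small, estimate (ii) for $S^i_s$ when $s$ is large, and estimate (iii) otherwise — then gives $Z_n^i\ge(2p-1)n - c\epsilon n - C\log n$ for every active frog and all large $n$. Together with the contribution of $\mathcal F$ this yields $\liminf_n m_n/n\ge2p-1-c\epsilon$ a.s., and letting $\epsilon\downarrow0$ completes the proof.

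The main obstacle is the recently activated frogs: such a frog is still close to its starting site and has accumulated almost no displacement, so one must rule out that it sits far to the left of $(2p-1)n$. This is precisely where transience enters — it guarantees that a late-activated frog is born near the right frontier, at $r_{T_i}\gtrsim(2p-1)T_i$, which already compensates for the short displacement $S^i_{n-T_i}$ it has since accumulated; without it (e.g.\ at $p=\tfrac12$) the left frontier keeps advancing and this mechanism fails. Verifying that $\ell_n$ eventually freezes and making the uniform large-deviation bounds precise are the remaining, essentially routine, technical points.
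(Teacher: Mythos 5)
Your proposal is correct, but the mechanism behind the lower bound $\liminf_n m_n/n \ge 2p-1$ is genuinely different from the paper's. The paper couples each frog $i$ with the full trajectory $(S^i_n)_{n\ge 0}$ run from time $0$ (via the redefinition $\widetilde Z^i_n = i + S^i_n - S^i_{T_i}$), calls $i$ good if $S^i_{T_i}\le i$, and shows the set of bad frogs is a.s.\ finite; that finiteness hinges on the strict inequality $\vmax>2p-1$ (Lemma~\ref{lemma_vmaxgreaterdrift}, itself resting on the subadditive-ergodic facts of Lemma~\ref{lemma_speed_max}), after which the problem reduces to the minimum of $2n+1$ independent walks (Lemma~\ref{lemma_indrw}). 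You instead exploit the interval structure of the activated set together with transience to place every late-activated frog at the right frontier at its birth, $i=r_{T_i}$, lower-bound $r_m$ by the single tagged walk $S^0_m\gtrsim(2p-1)m$, and then control the post-activation increment uniformly by Cram\'er plus the exponential tail of the all-time minimum; your three-case split (early activation, late activation, intermediate) is sound, and since all your large-deviation events concern the i.i.d.\ array $(S^i)$ directly, there is no hidden conditioning issue when you apply them at the random indices produced by the dynamics. What your route buys is a proof that bypasses Lemma~\ref{lemma_vmaxgreaterdrift} and the subadditive ergodic theorem altogether, needing only the law of large numbers for one walk plus union bounds; what it gives up is that it is tied to the transient regime from the outset, whereas the paper's Lemma~\ref{lemma_coupling} also covers $p=\tfrac12$ for the one-sided model and is reused later in the proof of Lemma~\ref{lemma_negative_frogs}. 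Both arguments lean on the same external input, the transience result of Gantert and Schmidt, to discard the negative frogs.
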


In the following two theorems we discuss some properties of the speed of the maximum.

\begin{thm} \label{theorem_monotonicity}
The speed of the maximum is an increasing function in $p$.
\end{thm}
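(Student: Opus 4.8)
The natural approach is a coupling argument: realize the frog models for two drift parameters $p \le q$ on a common probability space so that the model with parameter $q$ dominates the one with parameter $p$, site by site or front by front. The standard way to couple two simple random walks with different drifts is via a sequence of i.i.d. uniform random variables: given $(U_k^i)_{i\in\Z, k\in\N}$ i.i.d. uniform on $[0,1]$, set the increment of frog $i$'s walk at step $k$ to be $+1$ if $U_k^i \le p$ (resp. $\le q$) and $-1$ otherwise. Then pathwise each $q$-walk lies weakly to the right of the corresponding $p$-walk at every time, provided they start at the same place and are woken at the same time.

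The subtlety — and this is where the main obstacle lies — is that the activation times $T_i$ are themselves functions of the dynamics, so one cannot assume the two models wake the same frogs at the same times. One has to prove by induction on $n$ a statement of the form: in the $q$-model the set of activated frogs is "ahead" of the $p$-model in an appropriate sense, and in particular $M_n^{(q)} \ge M_n^{(p)}$ and the rightmost activated frog in the $q$-model is at least as far right (and activated no later) than in the $p$-model. One has to be careful that the minimum can drift left faster for larger $p$? No — by Theorem~\ref{theorem_speed} the minimum moves right at speed $2p-1$, which only helps. The cleanest monotone quantity to track is the right front: I would show that the rightmost active site $M_n$ in the $q$-model stochastically dominates that in the $p$-model by constructing the coupling and checking the inductive step each time a new frog is activated.

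Alternatively — and perhaps more robustly — one can avoid coupling the full dynamics and instead use a subadditivity/renewal description of $v_{\max}$. Using the representation of $M_n$ via the first-passage times to sites, $v_{\max} = 1/\lim_k (\tau_k/k)$ where $\tau_k$ is the time the front first reaches site $k$, one can try to show $\tau_k$ is stochastically monotone (decreasing) in $p$. This again reduces to a coupling, but only of the hitting-time structure, which may be easier to handle because one only needs to compare the times at which site $k$ is first reached, and an inductive coupling on successive records of the maximum suffices: each time the $p$-front advances to a new site, the $q$-front has already reached it (or reaches it using the same frog driven by the same uniforms, only sooner or simultaneously).

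So concretely the plan is: (i) fix $p \le q$ and build both models from the same uniform array $(U_k^i)$; (ii) define the coupling of walks so that $q$-increments dominate $p$-increments pathwise; (iii) prove by induction on the successive activation events that the $q$-model's active set contains a frog at least as far right as the $p$-model's maximum, with activation time no larger — the induction step uses that the frog responsible for a new record in the $p$-model is, in the $q$-model, driven by the same uniforms from a starting position at least as far right and a starting time no later, hence reaches any given site no later; (iv) conclude $M_n^{(q)} \ge M_n^{(p)}$ for all $n$ a.s., divide by $n$ and let $n\to\infty$. The main obstacle is step (iii): making the inductive hypothesis strong enough to be self-propagating, since one must track not just the position of the maximum but enough of the configuration of active frogs near the front to guarantee the next record in the $p$-model is matched or beaten in the $q$-model. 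I expect one needs the hypothesis "for every site $j \le M_n^{(p)}$, the first passage time of the $q$-front to $j$ is at most that of the $p$-front," which is exactly the monotonicity of first-passage times and closes under the dynamics.
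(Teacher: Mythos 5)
Your plan hinges on the inductive step in (iii), and that is exactly where it breaks: the claimed hypothesis ``for every site $j \le M_n^{(p)}$, the first passage time of the $q$-front to $j$ is at most that of the $p$-front'' does \emph{not} close under the dynamics. The reason is the frogs initially at negative sites. For the smaller parameter $p$ the cloud spreads further to the left, so strictly more negative frogs are eventually activated in the $p$-model than in the $q$-model; such a frog has no activated counterpart in the $q$-model (or is activated there only later), yet it can be the frog that produces the next record of the maximum in the $p$-model. When that happens, your induction has nothing to compare it with: the coupled trajectory in the $q$-model belongs to a frog that is still asleep. You do briefly consider the minimum and dismiss the issue (``the minimum moves right at speed $2p-1$, which only helps''), but the logic is backwards for this comparison — the faster leftward spread of the \emph{smaller}-$p$ model is an extra resource for its right front, and it is precisely the obstruction. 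The paper states this explicitly: no direct coupling of the two frog models seems possible for this reason.

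The paper's proof first removes the obstruction and only then couples. Lemma~\ref{lemma_negative_frogs} shows that deleting all negative frogs does not change $v_{\max}$: the right front of the negative frogs advances only at speed $2p-1$, which is strictly less than $\vmax$ by Lemma~\ref{lemma_vmaxgreaterdrift} (for $p=\tfrac12$ one uses the symmetric version of Lemma~\ref{lemma_coupling}), so only finitely many positive frogs are ever activated by negative frogs, and hence $T_i^+ \le T_i + T$ for an a.s.\ finite random variable $T$. In the model without negative frogs the set of frogs that can ever contribute is the same for both parameters, and there the standard coupling of the increments does make $T_i^+(p)$ decreasing in $p$, yielding monotonicity of $\vmax = \lim_n n/T_n^+$. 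If you want to salvage your argument, you must either prove an analogue of Lemma~\ref{lemma_negative_frogs} first, or strengthen your inductive hypothesis to control the left front as well — and the latter cannot work, since the activation times of negative frogs are genuinely \emph{not} monotone in the direction you need.
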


\begin{thm} \label{theorem_vmax_smaller_one}
For $p < 1$ it holds that $\vmax < 1$.
\end{thm}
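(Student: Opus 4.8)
The plan is to show that the right front fails to advance at a positive asymptotic frequency. Since $M_{n+1}\le M_n+1$ always, this gives $\liminf_n (n-M_n)/n\ge c$ almost surely for some $c>0$, and because $M_n/n\to\vmax$ a.s.\ this forces $\vmax\le 1-c<1$.

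First I would record the elementary but crucial observation that $M_{n+1}=M_n+1$ can only occur if at least one active frog sitting at site $M_n$ at time $n$ jumps to the right at step $n+1$. Writing $\mathcal F_n$ for the natural filtration and $N_n\ (\ge 1)$ for the number of active frogs at site $M_n$ at time $n$, it suffices that all of these $N_n$ frogs jump left, so
\begin{equation*}
\P\big(M_{n+1}\le M_n \mid \mathcal F_n\big)\ \ge\ (1-p)^{N_n}.
\end{equation*}
Since $M_n=\sum_{k=0}^{n-1}(M_{k+1}-M_k)\le \#\{k<n:M_{k+1}>M_k\}$, we get $n-M_n\ge \sum_{k<n}\1\{M_{k+1}\le M_k\}$, and applying the strong law of large numbers for bounded martingale differences to $\1\{M_{k+1}\le M_k\}-\P(M_{k+1}\le M_k\mid\mathcal F_k)$ reduces the theorem to showing
\begin{equation*}
\liminf_{n\to\infty}\frac1n\sum_{k=0}^{n-1}(1-p)^{N_k}>0\quad\text{a.s.}
\end{equation*}

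The heart of the matter is thus to control $N_n$, the number of frogs at the very front. Heuristically $N_n$ behaves like a subcritical branching process with bounded immigration: when the front advances from $j$ to $j+1$, the frogs at the new front are exactly those that sat at $j$ and jumped right, together with the one frog just activated at $j+1$, so $N_{n+1}$ is stochastically dominated by $\mathrm{Bin}(N_n-1,p)+2$; when the front stalls or retreats the count at the new front again only involves frogs sitting within bounded distance of the old front, plus at most one newly activated frog. I would make this rigorous by constructing, alongside the true dynamics, an autonomous \emph{dominating} Markov chain $(\widehat N_n)$ with $\widehat N_n\ge N_n$ for all $n$ -- a branching process with immigration whose offspring mean is $p<1$ and whose immigration is bounded, hence positive recurrent with a genuinely finite stationary law $\nu$. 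Since the front advances infinitely often (as $\vmax\ge\vmin=2p-1>0$, whence $M_n\to\infty$), the ergodic theorem applies; domination then makes the empirical measures $\frac1n\sum_{k<n}\delta_{N_k}$ almost surely uniformly tight, and $(1-p)^{\ell}>0$ for every finite $\ell$ yields $\liminf_n \frac1n\sum_{k<n}(1-p)^{N_k}\ge (1-p)^{\ell_0}\big(1-\nu(\{\ell>\ell_0\})\big)>0$ once $\ell_0$ is chosen large enough.

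The main obstacle is precisely the construction of the dominating chain: $N_n$ is not itself Markov, because after a stall or a retreat the number of frogs that flow into the front depends on how many frogs are piled up just behind it, which is a function of the whole past. The delicate point is to engineer a coupling -- perhaps by dominating a whole vector of ``depth-$\ell$'' counts near the front rather than $N_n$ alone -- that provably stays above $N_n$ through these stalls and retreats while remaining of subcritical branching type with bounded immigration. Once that is in place, the remaining ingredients (the martingale averaging in the first step and the routine large-deviation estimates for the underlying walks) are standard.
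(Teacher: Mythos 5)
Your reduction in the first two paragraphs is sound: $M_{n+1}\le M_n+1$ always, the front can only advance if some frog at the current maximum jumps right, and a martingale SLLN converts $\liminf_n\frac1n\sum_{k<n}(1-p)^{N_k}>0$ into $\vmax<1$. But the entire difficulty of the theorem is concentrated in the step you explicitly leave open: constructing an autonomous positive recurrent chain $(\widehat N_n)$ dominating the number $N_n$ of frogs at the front. As you yourself note, $N_n$ is not Markov, and after a stall or retreat the new front is replenished from the pile of frogs sitting just behind it; worse, on a retreat all $N_n$ frogs from the old maximum land on the new maximum \emph{in addition to} the inflow from behind, so no domination by a subcritical branching process with bounded immigration is available without first controlling the whole profile of counts behind the front. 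Since that control is precisely what is missing, the proposal as it stands is a correct reduction plus an unproved key lemma, i.e.\ it has a genuine gap.

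It is instructive to compare with how the paper closes exactly this gap. Rather than seeking an almost-sure pathwise domination, the paper (i) passes to a modified model in which a sleeping frog is replanted at every site vacated by a retreating maximum, so that the front is always adjacent to a fresh sleeping frog and activation times only decrease; (ii) proves by induction on $n$ a bound in expectation on the \emph{entire profile}, $\E[a_n(k)]\le \frac{(2-p)p}{(1-p)(2p-1)}p^{-k}$ for the number of frogs at distance $2k$ behind the maximum, which is the rigorous substitute for your ``depth-$\ell$ vector'' idea (note the bound grows in $k$, confirming that frogs do pile up behind the front and that a bounded-immigration domination would be hard); and (iii) converts the resulting uniform bound on $\E[a_n]$ into $\E[(1-p)^{a_n}]\ge(1-p)^{\E[a_n]}$ by Jensen, hence $\E[T_{n+1}^{\mathrm{mod}}-T_n^{\mathrm{mod}}]\ge 1+2(1-p)^{c}$, and concludes via the subadditivity identity $\vmax^{-1}=\inf_n\E[T_n]/n$ from Lemma~\ref{lemma_speed_max}. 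Working in expectation and through the activation times $T_n$ sidesteps both the ergodic-theoretic machinery and the coupling you would need. If you want to salvage your route, the induction of Lemma~\ref{lemma_uniform_bound} is the ingredient to import; your steps before and after it are fine but are the easy part.
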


In comparison to the last result note that for branching random walk on $\Z$ with binary branching the speed of the maximum equals $1$ for every $p \geq \frac12$, see e.g.~\cite[Theorem 18.3]{P99}.

In addition to studying the behaviour of the minimum and the maximum we investigate the distribution of the active frogs.
In the limit they are distributed uniformly inbetween the minimum and the maximum. To make this statement precise we rescale the positions of all active frogs at time~$n$ roughly to the interval $[0,1]$ and then consider the empirical distribution $\mu_n$, which is defined for $p < 1$ by
\begin{equation*}
\mu_n(B) = \frac{1}{|A_n|} \sum_{i \in A_n} \mathds{1}_{\bigl\{\frac{Z_n^i-\vmin n}{(\vmax-\vmin)n}\in B\bigr\}}
\end{equation*}
for every Borel set $B \subseteq [0,1]$. Note that $\mu_n$ is a random measure.

\begin{thm} \label{theorem_equidistribution}
Almost surely, the empirical distribution $\mu_n$ converges weakly to the Lebesgue measure~$\lambda$ on $[0,1]$ as $n \to \infty$.
\end{thm}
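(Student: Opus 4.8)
\medskip
\noindent\textbf{Proof (sketch).}
Throughout $\frac12<p<1$, so that $0<\vmin<\vmax<1$, where $\vmin=2p-1$ by Theorem~\ref{theorem_speed}, $\vmax<1$ by Theorem~\ref{theorem_vmax_smaller_one}, and $\vmax>\vmin$ (as is needed for $\mu_n$ to be defined). The plan is to show that, almost surely, the active frogs line up in an asymptotically affine profile: uniformly over all active frogs with initial site $i\ge0$,
\begin{equation*}
Z_n^i = \vmin n + i\cdot\frac{\vmax-\vmin}{\vmax} + o(n).
\end{equation*}
Since $i$ then runs essentially over $\{0,1,\dots,\vmax n\}$ and the right-hand side is an increasing affine function of $i$, rescaling the \emph{positions} of the active frogs to $[0,1]$ becomes asymptotically the same as rescaling their \emph{initial sites} to $[0,1]$, and the latter obviously produces the uniform distribution.

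First I would record two structural facts. By Theorem~\ref{theorem_speed}, $m_n/n\to 2p-1>0$, so the leftmost active frog has positive position for all large $n$; hence a frog with negative initial site can be activated only before that (a.s.\ finite) time, and only finitely many frogs are activated by any fixed time, so there are only finitely many such frogs. Writing $R_n:=\max A_n$, it follows that almost surely $A_n\cap\N_0=\{0,1,\dots,R_n\}$ for every $n$ and $|A_n|=R_n+O(1)$. Next, activating the frog at a site $s>0$ requires that some frog visit $s$, and conversely a frog visiting $s>0$ activates the frog at $s$; therefore $R_n$ equals the rightmost site visited up to time $n$, which is $\max_{k\le n}M_k$. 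Together with $M_k\le k$ (activation spreads at speed at most one) and the known convergence $M_k/k\to\vmax$, this gives $R_n/n\to\vmax$, hence $|A_n|/n\to\vmax$; and since $\{T_i\le n\}=\{R_n\ge i\}$ exhibits $i\mapsto T_i$ as the inverse of the nondecreasing sequence $n\mapsto R_n$, it also gives the uniform bound that for every $\varepsilon>0$, almost surely $|\vmax\,T_i-i|\le\varepsilon i$ for all large $i$.

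The quantitative core is a uniform strong law for the displacements. For $0\le i\le R_n$ we have $Z_n^i=i+S_{n-T_i}^i$, and conditionally on the value of $T_i$ the steps $(X_k^i)_k$ of frog $i$ are still i.i.d., because a frog's activation time is independent of its own increments. Hoeffding's inequality therefore gives $\P\bigl(|S_{n-T_i}^i-(2p-1)(n-T_i)|>\varepsilon n \mid T_i=t\bigr)\le 2e^{-c\varepsilon^2 n}$ for every $t\le n$, and a union bound over the at most $n+1$ relevant indices $i$ (those with $T_i\le n$), followed by Borel--Cantelli, yields: almost surely, for every $\varepsilon>0$ and all large $n$, one has $|S_{n-T_i}^i-(2p-1)(n-T_i)|\le\varepsilon n$ simultaneously for all active $i\ge0$. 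Substituting $n-T_i=n-i/\vmax+O(\varepsilon n)$ turns this into $Z_n^i=\vmin n+i\frac{\vmax-\vmin}{\vmax}+O(\varepsilon n)$ uniformly over active $i\ge0$, i.e.\ the profile claimed above.

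To conclude, fix rationals $0<a<b<1$ and $\varepsilon>0$. By the displacement estimate just obtained, the set of active frogs $i$ whose rescaled position $\frac{Z_n^i-\vmin n}{(\vmax-\vmin)n}$ lies in $[a,b]$ differs from $\{\,i\in\{0,\dots,R_n\}:i/(\vmax n)\in[a,b]\,\}$ by at most $O(\varepsilon n)+O(1)$ indices, so it has cardinality $(b-a)\vmax n+O(\varepsilon n)+O(1)$; dividing by $|A_n|=\vmax n\,(1+o(1))$ and letting first $n\to\infty$ and then $\varepsilon\to0$ gives $\mu_n([a,b])\to b-a$ almost surely. Intersecting the countably many null sets over rational $a,b$ and recalling that the limit measure $\lambda$ is non-atomic, this is enough to conclude that almost surely $\mu_n$ converges weakly to $\lambda$. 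I expect the main obstacle to be precisely the \emph{uniformity} of the two approximations over the $\Theta(n)$ frogs that carry the mass — upgrading $R_n/n\to\vmax$ to a uniform estimate on the activation times $T_i$, and upgrading the per-frog strong law to a uniform one via the large-deviation bound, all while correctly handling the (mild) dependence between $T_i$ and the remaining randomness; once these are in place, the affine-profile computation and the passage to weak convergence are routine.
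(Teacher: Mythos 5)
Your argument is essentially the paper's: both proofs show, uniformly over all active frogs, that $Z_n^i$ concentrates around an affine function of the initial site $i$ (via Hoeffding conditionally on $T_i$, a union bound over the at most $(n+1)^2$ pairs $(i,t)$, and Borel--Cantelli), and then convert this profile into the uniform law using $T_i/i\to\vmax^{-1}$ (the paper instead centres at the unconditional mean $\E[Z_n^i]=i+(n-\E[T_i])\vmin$ and uses $\inf_i\E[T_i]/i=\vmax^{-1}$ from Lemma~\ref{lemma_speed_max}, a purely cosmetic difference) together with $|A_n|/n\to\vmax$. The one substantive omission is the case $p=\tfrac12$: the measure $\mu_n$ is defined for all $p<1$ and the theorem covers that case, but your standing assumptions --- $\vmin>0$, only finitely many negative frogs ever activated, $A_n\cap\N_0=\{0,\dots,R_n\}$ with $|A_n|=R_n+O(1)$ --- all fail there, since $\vmin=-\vmax<0$ and infinitely many negative frogs wake up. The paper handles this by a case distinction in the definitions of $L_n(x,\varepsilon)$ and $R_n(x,\varepsilon)$; your argument does extend by symmetry ($T_i\approx|i|/\vmax$, profile $Z_n^i\approx i$, $|A_n|\approx 2\vmax n$), but as written it does not cover $p=\tfrac12$.
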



\section{Proofs}\label{chapter_proofs}


The existence of the speed of the maximum is proved using Liggett's Subadditive Ergodic Theorem. Indeed, this theorem yields more information which we use throughout this article. We summarize it in the following Lemma.

\begin{lemma} \label{lemma_speed_max}
For each $p$ there exists a positive constant $\vmax$ such that
\begin{equation*}
  \vmax = \lim_{n\to \infty} \frac{M_n}{n} \quad \text{a.s.}
 \end{equation*}
Furthermore, 
 \begin{equation} \label{L1_convergence}
  \vmax^{-1} = \lim_{i\to \infty} \frac{T_i}{i} = \lim_{i\to \infty} \frac{\E[T_i]}{i} = \inf_{i \in \N} \frac{\E[T_i]}{i} \quad \text{a.s.}
 \end{equation}
\end{lemma}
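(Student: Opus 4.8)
The plan is to run Liggett's Subadditive Ergodic Theorem on a spatial family of first-passage times and then transfer the conclusion to $M_n$. For $0\le i\le j$ let $T_{i,j}$ be the activation time of frog $j$ in the frog model that starts with a single active frog at $i$, sleeping frogs everywhere else, and in which the frog started at $k$ follows the trajectory $(S^k_n)_n$ from its activation time. Then $T_{0,j}=T_j$ is the quantity in the statement, and $T_{i,j}=T_{0,j-i}(\theta^i\omega)$, where $\theta$ is the shift on the i.i.d.\ array $\omega=(X^k_\cdot)_k$, so the whole family is covariant under spatial shifts.

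The first step is the pathwise subadditivity $T_{0,j}\le T_{0,i}+T_{i,j}$ for $0<i<j$. At time $T_{0,i}$ the original model has an active frog at $i$; coupling it with the model restarted afresh at $i$ using the same trajectories, an induction over the order in which frogs are activated shows that every site visited by the restarted model within time $t$ has been visited by the original model within time $T_{0,i}+t$ (a frog woken earlier in the original model has simply had more time to reach the same sites). Taking $t$ to be the time at which the restarted model activates $j$ gives the inequality. Next, for each fixed $k$ the diagonal sequence $(T_{mk,(m+1)k})_{m\ge 1}=(T_{0,k}(\theta^{mk}\omega))_{m\ge 1}$ is a measurable factor of the shift $\theta^k$ on the i.i.d.\ environment, hence stationary and ergodic. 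Finally $T_{0,1}\ge 1$ with $\E[T_{0,1}]<\infty$: for $p>\tfrac12$ this is the expected hitting time of level $1$ by a single $p$-biased walk, namely $1/(2p-1)$; for $p=1$ it is trivial, and for $p=\tfrac12$ one invokes the finiteness result of \cite{AMP02}. Also $T_{0,n}\ge 0$. Liggett's theorem then produces a constant $\gamma\in[1,\infty)$ with $T_i/i\to\gamma=\lim_i \E[T_i]/i=\inf_i \E[T_i]/i$ almost surely and in $L^1$.

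It remains to identify $\gamma^{-1}$ with $\lim_n M_n/n$. Since walks on $\Z$ move by $\pm1$, the front satisfies $M_m\le M_{m-1}+1$, so frog $i$ is activated exactly when the front first reaches level $i$; hence $\{T_i\le n\}=\{\max_{m\le n}M_m\ge i\}$. As $T_i/i\to\gamma$, inverting this identity gives $\max_{m\le n}M_m/n\to\gamma^{-1}$ a.s., and since $M_n\le\max_{m\le n}M_m$ this already yields $\limsup_n M_n/n\le\gamma^{-1}$. For the matching lower bound, fix $\varepsilon>0$: for large $n$ the frog at level $\lfloor\gamma^{-1}n/(1+\varepsilon)\rfloor$ has been active since a time $\le(1-c_\varepsilon)n$, so has performed at least $c_\varepsilon n$ steps by time $n$, and its position is therefore at least $\gamma^{-1}n/(1+\varepsilon)-o(n)$ — using the drift when $p>\tfrac12$, and an Azuma/Borel--Cantelli estimate over the at most $n$ relevant walks when $p=\tfrac12$ — whence $\liminf_n M_n/n\ge\gamma^{-1}$. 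Thus $\vmax:=\gamma^{-1}=\lim_n M_n/n$ exists a.s.\ and is positive (as $\gamma<\infty$), and $\gamma=\vmax^{-1}$, which is exactly \eqref{L1_convergence}.

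I expect the main obstacle to be the monotone coupling behind $T_{0,j}\le T_{0,i}+T_{i,j}$: one must check carefully that driving both models by the shared array really makes the original model dominate the restarted one even though the common frogs are woken at different times, and that the resulting random variables $T_{i,j}$ have the joint distribution required by Liggett's hypotheses. The ergodicity of the diagonal sequences and the case $p=\tfrac12$ are comparatively routine.
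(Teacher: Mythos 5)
Your proposal is correct and follows essentially the same route as the paper: Liggett's Subadditive Ergodic Theorem applied to the two-parameter family $T_{i,j}$ (whose hypotheses the paper simply delegates to the argument of Alves et al.), followed by an inversion to pass from $T_i/i$ to $M_n/n$. Two minor remarks: $\E[T_1]$ is at most, not equal to, $1/(2p-1)$ (other frogs may reach site $1$ first), though only finiteness is needed; and the paper's inversion step is a little cleaner, using the deterministic sandwich $k_n-(n-T_{k_n})\le M_n\le k_n$ for the index $k_n$ with $T_{k_n}\le n<T_{k_n+1}$, which avoids the large-deviation estimates in your lower bound.
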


\begin{proof}
Let $T_{i,j}$ denote the activation time of the frog at site $j$ when initially there is one active frog at site $i$ and one sleeping frog at every other site.
An application of Liggett's Subadditive Ergodic Theorem (see e.g.~\cite{L85}) to the times $(T_{i,j})_{i,j \in \Z}$ shows the existence of a positive constant $\vmax$ such that~\eqref{L1_convergence} holds. For $p=\frac12$ this is proved for a more general model by Alves et al.~in~\cite{AMP02}. In our setting their argument immediately applies to $p >\frac12$ as well.

By a standard argument it now follows that $\limn \frac{M_n}{n}$ exists almost surely:
There exists a unique random sequence $(k_n)_{n\in\N}$ with values in $\N_0$ such that $T_{k_n} \leq n < T_{k_n+1}$. Note that $\limn k_n=\infty$. Hence,
\begin{equation*}
 \limn \frac{T_n}{n}= \limn \frac{T_{k_n}}{k_n} = \limn\frac{n}{k_n} \quad \text{a.s.}
\end{equation*}
Obviously, $k_n - (n-T_{k_n}) \leq M_n \leq k_n$. This implies
\begin{equation*}
 \frac{k_n}{n} - \biggl( 1- \frac{T_{k_n}}{k_n} \cdot \frac{k_n}{n} \biggr) \leq \frac{M_n}{n} \leq \frac{k_n}{n}.
\end{equation*}
Taking limits yields the claim.
\end{proof}


In order to prove Theorem~\ref{theorem_speed} we compare the frogs initially on non-negative sites with independent random walks. The speed of the minimum of independent random walks can be computed explicitly which is done in the first of the following lemmas. Then it remains to deal with the frogs initially on negative sites. Luckily, they can be ignored due to the transience of the frog model proved in \cite{GS09} by Gantert and Schmidt.

We often need to talk about the frogs initially on negative sites. To keep the sentence structure simple we refer to them as the negative frogs.
Analogously we speak of non-negative and positive frogs. Also for any $k\in \Z$ the frog initially on site $k$ is called frog $k$.

\begin{lemma} \label{lemma_indrw}
It holds that
\begin{equation*}
 \limn \frac{1}{n} \min_{i \in \{-n, \ldots, n\}} S_n^i = 2p-1 \quad \text{a.s.}
\end{equation*}
\end{lemma}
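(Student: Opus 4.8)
The quantity $\min_{i\in\{-n,\ldots,n\}} S_n^i$ is the minimum over $2n+1$ independent copies of a simple random walk evaluated at time $n$, each with mean $(2p-1)n$. So I would expect the minimum to be pulled below the mean only by a deviation of order $n$ coming from the large number ($\approx 2n$) of samples, and the right scale for a single large-deviation event is governed by the rate function of the walk. Concretely, I would write $S_n^i = n - 2 L_n^i$ where $L_n^i \sim \mathrm{Bin}(n,1-p)$ counts the left steps, so $\min_i S_n^i = n - 2\max_i L_n^i$, and it suffices to show $\tfrac1n \max_{i\in\{-n,\ldots,n\}} L_n^i \to 1-p$ a.s. The lower bound $\liminf \ge 1-p$ is immediate since each individual term satisfies $L_n^i/n \to 1-p$ a.s. (e.g.\ for $i=0$), so the whole content is the matching upper bound $\limsup \le 1-p$.

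**Upper bound via union bound and Borel–Cantelli.** For the upper bound I would fix $\varepsilon>0$ and estimate, by a union bound over the $2n+1$ indices,
\begin{equation*}
\P\Bigl(\max_{i\in\{-n,\ldots,n\}} L_n^i \ge (1-p+\varepsilon)n\Bigr) \le (2n+1)\,\P\bigl(L_n^0 \ge (1-p+\varepsilon)n\bigr).
\end{equation*}
A Chernoff/Cramér bound for the binomial gives $\P(L_n^0 \ge (1-p+\varepsilon)n) \le e^{-c(\varepsilon)n}$ for a constant $c(\varepsilon)>0$ (the Legendre transform of the step distribution at $1-p+\varepsilon$, which is strictly positive because $1-p+\varepsilon > 1-p = \E[\,\mathds{1}\{X_1^0=-1\}\,]$). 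Hence the probabilities $\P(\max_i L_n^i \ge (1-p+\varepsilon)n)$ are summable in $n$, and Borel–Cantelli yields $\limsup_n \tfrac1n\max_i L_n^i \le 1-p+\varepsilon$ a.s.; letting $\varepsilon \downarrow 0$ along a countable sequence finishes the argument. Translating back, $\liminf_n \tfrac1n \min_i S_n^i \ge 1 - 2(1-p) = 2p-1$, and combined with the trivial $\limsup \le 2p-1$ (take $i=0$) this gives the claim.

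**Main obstacle.** There is no serious obstacle here: the only thing to be careful about is that the union bound is over a number of terms ($2n+1$) that is polynomial in $n$, while the single-term deviation probability is exponentially small, so the product is still summable — this is exactly why the minimum stays at the linear scale $2p-1$ and does not drift further down. If one wanted to avoid invoking a named large-deviation theorem, the exponential bound can be produced by hand from $\P(L_n^0 \ge an) \le e^{-\theta a n}\E[e^{\theta \mathds{1}\{X_1^0=-1\}}]^n = \exp(-n(\theta a - \log(p + (1-p)e^{\theta})))$ and optimizing over $\theta>0$; positivity of the exponent for $a = 1-p+\varepsilon$ follows since the derivative of the exponent in $\theta$ at $\theta=0$ equals $a-(1-p)=\varepsilon>0$.
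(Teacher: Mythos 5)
Your proof is correct and follows essentially the same route as the paper: a union bound over the $2n+1$ indices, an exponential large-deviation (Cram\'er/Chernoff) estimate for a single walk, Borel--Cantelli, and $\varepsilon \downarrow 0$. The reformulation via the binomial count of left steps $L_n^i$ is only a cosmetic repackaging of the same argument.
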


\begin{proof}
We only need to prove $\liminf_{n \to \infty} \frac{1}{n} \min_{i \in \{-n, \ldots, n\}} S_n^i \geq 2p-1$. For all $\varepsilon >0$ we have
\begin{eqnarray*}
\P\biggl( \frac1n \min_{i\in \{-n, \ldots, n\}}  S_n^i \leq 2p-1 - \varepsilon \biggr)
    &=    &\P\biggl( \bigcup_{i = -n}^n \Bigl\{ \frac{S_n^i}{n} \leq 2p-1 - \varepsilon \Bigr\} \biggr) \\
    &\leq & (2n+1) \P\biggl(\frac{S_n^0}{n} \leq 2p-1 - \varepsilon \biggr).
\end{eqnarray*}
By Cramér's Theorem the probability in the last term of this calculation decays exponentially fast in $n$, hence, it is summable. An application of the Borel-Cantelli Lemma and letting $\varepsilon \to 0$ completes the proof.
\end{proof}

This result now enables us to prove a formula for the speed of the minimum of the non-negative frogs.

\begin{lemma} \label{lemma_coupling}
Let $A_n^+= \{i \geq 0 \colon T_i \leq n\}$. Then
\begin{equation*}
\limn \frac1n \min_{i \in A_n^+} Z_n^i = 2p -1 \quad \text{a.s.}
\end{equation*}
\end{lemma}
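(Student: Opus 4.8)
The plan is to sandwich $\min_{i \in A_n^+} Z_n^i$ between two quantities both converging to $2p-1$. The upper bound is easy: frog $0$ is active from the start, so $\min_{i \in A_n^+} Z_n^i \le Z_n^0 = S_n^0$, and $S_n^0/n \to 2p-1$ a.s. by the strong law of large numbers. Hence $\limsup_n \frac1n \min_{i \in A_n^+} Z_n^i \le 2p-1$ a.s.

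For the lower bound I would compare the non-negative frogs with the family of independent walks $(S_n^i)_{i \ge 0}$ from Lemma~\ref{lemma_indrw}. The key structural observation is that any non-negative frog that is active at time $n$ was woken up at some time $T_i \le n$ at some site, and from that point on follows $i + S^i_{\cdot - T_i}$. I want to argue that no active non-negative frog can be too far to the left, by relating its position to the minimum of the $S^i$ over an appropriate range of indices and times. Concretely, if $i \in A_n^+$ then $Z_n^i = i + S_{n-T_i}^i \ge i + \min_{0 \le m \le n} S_m^i$; since $i \ge 0$, it suffices to control $\min_{i \ge 0} \min_{0 \le m \le n} S_m^i$ from below by roughly $(2p-1)n$. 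But only finitely many non-negative frogs are active by time $n$ — in fact $|A_n^+| \le M_n + 1 \le n+1$ trivially, and more usefully the labels $i$ of active non-negative frogs satisfy $i \le M_n$, so effectively $i$ ranges over $\{0,\dots,n\}$. Thus the relevant quantity is $\min_{0 \le i \le n} \min_{0 \le m \le n} S_m^i$.

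To handle the running minimum over $m \le n$ rather than the value at time exactly $n$, I would run the same Cramér/Borel–Cantelli argument as in Lemma~\ref{lemma_indrw} but with a union bound over both $i \in \{0,\dots,n\}$ and $m \in \{0,\dots,n\}$: for fixed $\varepsilon > 0$,
\[
\P\Bigl( \min_{0 \le i \le n} \min_{0 \le m \le n} S_m^i \le (2p-1-\varepsilon)n \Bigr) \le (n+1)\sum_{m=0}^{n} \P\bigl(S_m^0 \le (2p-1-\varepsilon)n\bigr).
\]
For $m$ with $(2p-1)m \le (2p-1-\tfrac\varepsilon2) n$ the probability is trivially bounded by using that $S_m^0 \ge -m \ge (2p-1-\varepsilon)n$ fails only when... — more cleanly, split at $m_0 = \lfloor \varepsilon n / (2(2p-1+\varepsilon))\rfloor$ or simply note $S_m^0 \ge -m$, so the event is empty once $m < (1-2p+\varepsilon)^{-1}\varepsilon n$-type threshold; for the remaining $m \ge cn$ one has $(2p-1-\varepsilon)n \le (2p-1-\varepsilon')m$ for some $\varepsilon'>0$, and Cramér gives exponential decay in $m \ge cn$, hence in $n$. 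Summing over the at most $n+1$ such values keeps the bound summable in $n$. Borel–Cantelli then yields $\liminf_n \frac1n \min_{0\le i \le n}\min_{0 \le m \le n} S_m^i \ge 2p-1-\varepsilon$ a.s., and letting $\varepsilon \downarrow 0$ combined with $Z_n^i \ge i + \min_{0 \le m \le n} S_m^i \ge \min_{0\le m\le n} S_m^i$ for $i \in A_n^+$ gives $\liminf_n \frac1n \min_{i \in A_n^+} Z_n^i \ge 2p-1$ a.s.

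The main obstacle I anticipate is the bookkeeping in the lower bound: making precise that the index set of active non-negative frogs at time $n$ is contained in $\{0,\dots,n\}$ (or at least in a set of polynomial size), and correctly splitting the union bound over $m$ into the trivially-empty small-$m$ regime and the large-deviations large-$m$ regime so that the total is summable — the naive bound $(n+1)^2 \P(S_n^0 \le (2p-1-\varepsilon)n)$ already works directly since the single-$n$ large deviation probability decays exponentially and beats the polynomial factor, so in fact I would just bound $\min_{0\le m\le n}S_m^i$ from below by $\min(0, \text{value at the minimizing } m)$ and, if needed, replace the running minimum by noting that either the walk stays above $(2p-1-\varepsilon)n \le 0$ trivially or there is a first time it drops below, at which point a Cramér estimate at that time applies; the cleanest route is the double union bound with the split just described. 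Everything else is routine.
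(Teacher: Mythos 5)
Your upper bound (via frog $0$) is fine, but the lower bound has a genuine gap, and it is not just bookkeeping. The chain $Z_n^i = i + S^i_{n-T_i} \ge i + \min_{0\le m\le n}S^i_m \ge \min_{0\le m\le n}S^i_m$ is correct but fatally lossy: since $S^i_0=0$, the running minimum $\min_{0\le m\le n}S^i_m$ is nonpositive for every $i$ and $n$, so $\min_{0\le i\le n}\min_{0\le m\le n}S^i_m \le 0$ holds deterministically and can never be bounded below by $(2p-1-\varepsilon)n>0$. Concretely, the small-$m$ terms of your union bound satisfy $\P\bigl(S^0_m\le(2p-1-\varepsilon)n\bigr)=1$ (already at $m=0$), and your proposed fix --- that the event is empty for small $m$ because $S^0_m\ge -m$ --- fails because the threshold $(2p-1-\varepsilon)n$ is positive, not negative; at one point you write ``$(2p-1-\varepsilon)n\le 0$'', which is false for $p>\tfrac12$. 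The underlying issue is that for a frog activated early (small $i$, small $T_i$) the lower bound on $Z_n^i$ must come from the rightward drift accumulated over the $n-T_i\approx n$ steps after activation, i.e.\ from $S^i$ evaluated at a time close to $n$; passing to the running minimum discards exactly that drift.

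What is needed is a comparison of $Z_n^i$ with $S^i_n$ at time exactly $n$, so that Lemma~\ref{lemma_indrw} applies. The paper achieves this by redefining the dynamics equivalently as $\widetilde Z^i_n = i + S^i_n - S^i_{T_i}$ for $n\ge T_i$, so that after activation the frog and the walk $S^i$ move in lockstep; then $\widetilde Z^i_n \ge S^i_n$ for every ``good'' frog, meaning every $i$ with $S^i_{T_i}\le i$, and the bad frogs form an a.s.\ finite set because $S^i_{T_i}/T_i\to 2p-1$ while $i/T_i\to\vmax>2p-1$ (Lemmas~\ref{lemma_speed_max} and~\ref{lemma_vmaxgreaterdrift}). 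The finitely many bad frogs contribute an error independent of $n$, and Lemma~\ref{lemma_indrw} then gives the lower bound. If you insist on the non-coupled form $Z^i_n=i+S^i_{n-T_i}$, you would instead have to keep the $+i$ and control $T_i$ via Lemma~\ref{lemma_speed_max}, splitting according to whether $n-T_i$ is macroscopic; this amounts to the same input and is messier than the coupling.
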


Before proving Lemma~\ref{lemma_coupling} we make another short observation. Obviously $\vmax$ is at least as big as the speed of a single frog, i.e.~$\vmax \geq 2p-1$. In fact, this inequality is strict for all $p \geq \frac12$. For $p = \frac12$ this fact is known from~\cite{AMP02}.

\begin{lemma} \label{lemma_vmaxgreaterdrift}
For $\frac12 < p < 1$ it holds that $\vmax > 2p-1$.
\end{lemma}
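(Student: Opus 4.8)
The plan is to show that the maximum $M_n$ advances strictly faster than a single drifting frog by exhibiting, infinitely often, a "relay" of newly activated frogs that collectively move ahead of the ballistic rate $2p-1$. Recall from Lemma~\ref{lemma_speed_max} that $\vmax^{-1} = \inf_{i\in\N} \E[T_i]/i$, so it suffices to produce a single index $i$ with $\E[T_i] < i/(2p-1)$, i.e.\ to beat the single-frog expected hitting time of site $i$ by even a constant amount for one fixed $i$. Equivalently, writing $\tau_i$ for the first hitting time of site $i$ by the original frog alone, we have $\E[\tau_i] = i/(2p-1)$, and we want $\E[T_i] < \E[\tau_i]$ for some $i$: the woken-up frogs must, in expectation, help the front reach some site sooner than the lone walker would.

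The key step is a coupling/short-cut argument at small scale. Consider reaching site $i=2$. The original frog, on its way, activates frog $1$ as soon as it first steps on site $1$. With positive probability the following happens: the original frog goes $0\to 1$ (probability $p$), immediately waking frog $1$; then on the next step frog $1$ jumps to $2$ (probability $p$). On this event $T_2 \le 2$. On the complementary event we bound $T_2$ crudely by the hitting time of $2$ for the original frog restarted appropriately, which has finite expectation uniformly. Combining, $\E[T_2] \le 2p^2 \cdot 1 \cdot \tfrac{?}{} + \dots$ — more carefully, I would write $\E[T_2] = \E[T_2 \1_{G}] + \E[T_2\1_{G^c}]$ where $G$ is a favourable event of probability bounded below, on which $T_2$ is strictly smaller than it would typically be, and on $G^c$ use $\E[T_2 \1_{G^c}] \le \E[\tau_2 \1_{G^c}] + C\,\P(G^c)$ via the strong Markov property, so that the net effect is $\E[T_2] \le \E[\tau_2] - \delta$ for some $\delta>0$. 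Since $\E[\tau_2] = 2/(2p-1)$, this gives $\E[T_2]/2 < 1/(2p-1)$, and hence $\vmax > 2p-1$ by \eqref{L1_convergence}.

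The main obstacle is making the "short-cut saves expected time" estimate rigorous: one must compare $T_i$ with $\tau_i$ on the same probability space and control the error on the unfavourable event, since a priori an extra active frog could also be a red herring that never overtakes the original one. The clean way is a direct coupling in which the original frog follows the trajectory $(S^0_n)$ and the front $M_n$ dominates the position of whichever frog is currently ahead; one then argues that $M_{\tau_1+1} \ge 1$ always, while with probability $\ge p^2$ one has $M_2 = 2$ strictly ahead of $S^0_2 \le 1$ possible outcomes, and this positive-probability lead, fed into the renewal structure of $\E[T_i]/i$, forces the strict inequality. I expect the bookkeeping of which frog realizes the maximum, and ensuring the gain does not get washed out, to be the only delicate point; the large-deviation input is not needed here, only the subadditive limit \eqref{L1_convergence} and an elementary finite-step computation.
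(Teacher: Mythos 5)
Your proposal is correct and takes essentially the same route as the paper: the paper also reduces the claim via \eqref{L1_convergence} to beating the lone walker's expected hitting time at a single index, namely $\E[T_1]<\E[T_1^s]$, with the strict gain supplied by the frog at $-1$ (woken when frog $0$ first steps left) rather than by your frog at $1$. The bookkeeping you flag as delicate is in fact immediate: under the canonical coupling $T_2\le\tau_2:=\inf\{n\in\N\colon S_n^0=2\}$ pathwise because frog $0$ is always active, $\E[\tau_2]=2/(2p-1)<\infty$, and $\tau_2-T_2\ge 2$ on the probability-$p^2(1-p)$ event that frog $0$ steps $0\to 1\to 0$ while frog $1$ steps to $2$, whence $\E[T_2]\le \E[\tau_2]-2p^2(1-p)<2/(2p-1)$ and $\vmax^{-1}\le\E[T_2]/2<(2p-1)^{-1}$.
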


\begin{proof}
The key point in this proof is to notice that $\E[T_1]< \E[T_1^s]$ holds, where $T_1^s=\inf\{n \in \N \colon S_n^0=1\}$ denotes the hitting time of the point $1$ of a single simple random walk with drift $2p-1$.
Hence, by Lemma~\ref{lemma_speed_max}
\begin{equation*}
\vmax^{-1} =    \inf_{i \in \N} \frac{\E[T_{i}]}{i}
	   \leq \E[T_1]
	   <    \E[T_1^s]
	   =    \frac{1}{2p-1}. \qedhere
\end{equation*}
\end{proof}

One can of course find better lower bounds for the speed of the maximum by estimating $\E[T_i]$ for $i \geq 1$, but this is not done in this article.

\begin{proof}[Proof of Lemma~\ref{lemma_coupling}]
It is enough to show $\liminf_{n\to\infty} \frac1n \min_{i \in A_n^+} Z_n^i \geq 2p-1$ almost surely.
 
In this proof we use a different but equivalent way of defining the movement of the frogs. For every $i \in \Z$ define
\begin{equation*}
 \widetilde{Z}_n^i =
         \begin{cases}
          i                   & \text{for $n < T_i$,}  \cr
          i+S_{n}^i-S_{T_i}^i & \text{for $n\geq T_i$.}
         \end{cases}
\end{equation*}
Note that $(\widetilde{Z}_n^i)$ equals $(Z_n^i)$ in distribution. We now want to compare the trajectory $(\widetilde{Z}_n^i)_{n \in \N_0}$ of each frog with the trajectory $(S_n^i)_{n\in\N_0}$ of the corresponding simple random walk. From time $T_i$ onwards they move synchronously by the above definition. Therefore, we only need to compare their locations at time $T_i$. Note that $\widetilde{Z}_{T_i}^i=i$ and define $G=\{i\geq 0\colon S_{T_i}^i \leq i\}$ to be the set of good frogs. Now $i\in A_n^+\cap G$ implies $S_n^i \leq \widetilde{Z}_n^i$ for all $n \in \N$, i.e.~all good frogs stay to the right of their corresponding random walk. Hence,
\begin{align} \label{proof_speed_1}
\min_{i\in A_n^+} \widetilde{Z}_n^i &\geq \min_{i\in A_n^+} S_n^i - \sum_{i\in G^c\cap A_n^+} \bigl(S_n^i- \widetilde{Z}_n^i \bigr) \nonumber \\ 
                                    &\geq \min_{i\in A_n^+} S_n^i - \sum_{i\in G^c} \bigl(S_{T_i}^i-i \bigr).
\end{align}

We claim that the set $G^c$ is finite almost surely. For $p=1$ this is obviously true. For $p < 1$ it is enough to show that
\begin{equation}
\lim_{i \to \infty} \frac{S_{T_i}^i- i}{T_i} = 2p-1-\vmax \quad \text{a.s.} \label{proof_speed_2}
\end{equation}
since by Lemma~\ref{lemma_vmaxgreaterdrift} the last term is strictly negative and hence $S_{T_i}^i- i >0$ can occur only for finitely many $i \geq 0$ almost surely. 

Note that $(S_n^i)_{n \le T_i}$ is independent of the movement of the frogs up to time~$T_i$. Therefore, $S_{T_i}^i$ equals $S_{T_i}^0$ in distribution. 
Using a standard large deviation estimate we get for every $\varepsilon > 0$
\begin{equation*}
\P\biggl(\frac{S_{T_i}^i}{T_i} \leq 2p-1-\varepsilon \biggr) =    \P\biggl(\frac{S_{T_i}^0}{T_i} \leq 2p-1-\varepsilon \biggr)
                                                             \leq \E \bigl[ \mathrm{e}^{-cT_i} \bigr] 
                                                             \leq \mathrm{e}^{-ci} 
\end{equation*}
where $c=c(\varepsilon,p)>0$ is a constant. By symmetry also $\P\Bigl(\frac{S_{T_i}^i}{T_i} \geq 2p-1+\varepsilon \Bigr)$ decays exponentially fast in $i$. An application of the Borel-Cantelli Lemma and letting $\varepsilon \to 0$ thus shows
\begin{equation*}
\limn \frac{S_{T_i}^{i}}{T_i} = 2p-1 \quad \text{a.s.}
\end{equation*}
Further, we know from Lemma~\ref{lemma_speed_max} that $\lim_{i \to \infty}\frac{i}{T_i} = \vmax$ almost surely. This proves equation~\eqref{proof_speed_2} which implies that $G^c$ is finite almost surely. 

Therefore, the second term on the right side in inequality~\eqref{proof_speed_1} is finite almost surely. Also note that it does not depend on $n$. Thus,
\begin{equation*}
\liminf_{n\to \infty} \frac1n \min_{i\in A_n^+} \widetilde{Z}_n^i  \geq \liminf_{n\to \infty} \frac1n \min_{i\in A_n^+} S_n^i \quad \text{a.s.}  
\end{equation*}
As $A_n^+ \subseteq \{-n, \ldots, n\}$ an application of Lemma~\ref{lemma_indrw} finishes the proof.
\end{proof}

\begin{proof}[Proof of Theorem \ref{theorem_speed}]
As shown in \cite[Theorem 2.3]{GS09} the frog model with drift, as considered here for $p \neq \frac12$, is transient almost surely. This means that the origin is visited by only finitely many frogs almost surely. Therefore only finitely many negative frogs are ever activated. Hence, Theorem~\ref{theorem_speed} follows from Lemma~\ref{lemma_coupling}.
\end{proof}


Next we prove that the speed of the maximum is an increasing function in the drift parameter $p$. Though this statement might at first seem obvious, no direct coupling of the frog models for different drift parameters seems possible, since for smaller values of $p$ more negative frogs will eventually be woken up, which might help in pushing the front forward. But we can ignore all these frogs without changing the speed of the maximum, similar to the proof of Theorem~\ref{theorem_speed}. This is shown in the next lemma. We therefore consider the frog model without negative frogs. It evolves in the same way as our usual frog model, but has another initial configuration. Here we assume that there is one sleeping frog at every positive integer, one active frog at $0$ and no frogs on negative sites. We denote the activation time of the $i$-th frog in the frog model without negative frogs by $T^+_i$.

\begin{lemma}\label{lemma_negative_frogs}
It holds that
\begin{equation*}
\vmax^{-1}= \lim_{n\to\infty} \frac{T^+_n}{n} \quad \text{a.s.}
\end{equation*}
\end{lemma}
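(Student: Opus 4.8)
The plan is to show that adding the negative frogs does not speed up the activation of positive frogs, and conversely that the model without negative frogs is slower by at most a negligible amount. The key comparison is a monotone coupling: run both models on the same array $(X_k^i)_{i \in \Z, k \in \N}$, using the $\widetilde{Z}$-style construction where frog $i$ follows the increments $(S_n^i)$ from its activation time onward. In the full model a positive frog can be woken either by another positive frog or (possibly) by a negative frog, whereas in the restricted model only positive frogs wake others; hence by an induction on the order of activations one gets $T_i^+ \geq T_i$ for every $i \geq 0$, so $\liminf_{n \to \infty} \frac{T_n^+}{n} \geq \liminf_{n \to \infty} \frac{T_n}{n} = \vmax^{-1}$ by Lemma~\ref{lemma_speed_max}.

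For the reverse inequality one exploits transience exactly as in the proof of Theorem~\ref{theorem_speed}: by \cite[Theorem 2.3]{GS09} only finitely many negative frogs are ever activated, so almost surely there is a (random) finite time $\tau$ after which no negative frog contributes anything new. After time $\tau$, every site to the right of $m_\tau$ that will ever be visited is visited by a frog whose entire relevant trajectory lies in the positive model too. One then argues that the discrepancy $T_i - T_i^+$ is bounded by a random constant not depending on $i$: once the fronts of the two models have "synchronised" past the finitely many sites where negative frogs helped, the same frog does the work of pushing the maximum forward in both models, up to a fixed time shift. This gives $\limsup_{n \to \infty} \frac{T_n^+}{n} \leq \vmax^{-1}$, and combined with the first part and the existence of $\lim \frac{T_n}{n}$ from Lemma~\ref{lemma_speed_max}, the limit $\lim_{n\to\infty}\frac{T_n^+}{n}$ exists and equals $\vmax^{-1}$.

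Concretely, I would make the second step quantitative by the same device used in Lemma~\ref{lemma_coupling}: define the analogue of the "good frogs" for the negative-frog-free model, show the bad set is almost surely finite using \eqref{proof_speed_2} and Lemma~\ref{lemma_vmaxgreaterdrift}, and bound $\min_{i \in A_n} Z_n^i$ from both sides. Alternatively, and perhaps more cleanly, one can run Liggett's Subadditive Ergodic Theorem directly on the times $(T^+_{i,j})_{0 \le i \le j}$ for the negative-frog-free model to get existence of $\lim \frac{T_n^+}{n}$ as some constant $w^{-1}$, and then only needs the two-sided comparison $T_i \leq T_i^+ \leq T_i + R$ for a random finite $R$ to conclude $w = \vmax$; this isolates all the probability into one inequality.

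The main obstacle is the reverse bound $\limsup_{n}\frac{T_n^+}{n} \le \vmax^{-1}$: transience tells us negative frogs are eventually irrelevant for the \emph{origin}, but one must argue they are eventually irrelevant for the \emph{right front}, i.e.\ that a negative frog cannot repeatedly overtake the positive frogs and keep contributing to $M_n$. Handling this requires combining the a.s.\ finiteness of the activated negative frogs with the fact that each individual activated negative frog has speed $2p-1 < \vmax$ (Lemma~\ref{lemma_vmaxgreaterdrift}), so any fixed negative frog eventually falls behind the front; since only finitely many are activated, after some random time none of them is ever at the maximum again. Making the bookkeeping of this "eventually behind" statement precise, uniformly over the finitely many relevant negative frogs, is the technical heart of the argument.
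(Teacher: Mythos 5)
For $p>\frac12$ your proposal is essentially the paper's proof: the trivial monotone coupling gives $T_i \leq T_i^+$; transience gives that only finitely many negative frogs are ever activated; comparing speeds via Lemma~\ref{lemma_vmaxgreaterdrift} ($2p-1<\vmax$) shows that these finitely many negative frogs eventually fall behind the right front, so the set $E$ of positive frogs at whose activation a negative frog is present is finite; and the uniform bound $T_i^+\leq T_i+T$ with $T=\sup_{i\in E}(T_i^+-T_i)$ follows by exactly the induction you sketch. You also correctly identified the technical heart, namely that irrelevance of the negative frogs for the \emph{origin} must be upgraded to irrelevance for the \emph{right front}.

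There is, however, a genuine gap: the lemma must also hold at $p=\frac12$, since it is the key input to Theorem~\ref{theorem_monotonicity}, which concerns the whole range $p\in[\frac12,1]$. At $p=\frac12$ the model is recurrent, \cite[Theorem 2.3]{GS09} does not apply, and in fact \emph{every} negative frog is eventually activated, so both pillars of your argument (``only finitely many negative frogs are activated'' and the uniformity ``over the finitely many relevant negative frogs'') collapse. The paper closes this case by proving the weaker but uniformly valid statement that the maximum over \emph{all} activated negative frogs, $\max\{Z_n^i : i<0,\, T_i\leq n\}$, travels with speed $2p-1$; at $p=\frac12$ this is $0<\vmax$, and the statement follows by reflecting Lemma~\ref{lemma_coupling} (the speed of the minimum of the non-negative frogs) through the origin. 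Once you have that, your finiteness of $E$, the uniform time shift $T$, and the induction $T_i^+\leq T_i+T$ all go through verbatim for every $p\geq\frac12$. So the fix is to replace the appeal to transience by this reflected version of Lemma~\ref{lemma_coupling}; with that substitution your argument matches the paper's.
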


\begin{proof}
We only need to prove $\limsup_{n \to \infty} \frac{T^+_n}{n} \leq \vmax^{-1} $ almost surely.
First, we show that the speed of the maximum of all negative frogs in the usual frog model equals $2p-1$ almost surely, i.e.~setting $A_n^- =\{i <0\colon T_i \leq n\}$ we prove that
\begin{equation} \label{proof_lemma_negative_frogs_1}
 \limn \frac1n \max_{i \in A_n^-} Z_n^i = 2p -1 \quad \text{a.s.}
\end{equation}

For $p > \frac12$ only finitely many negative frogs will ever be activated almost surely as remarked in the proof of Theorem~\ref{theorem_speed} and proved in \cite{GS09}. In this case equation~\eqref{proof_lemma_negative_frogs_1} is thus obvious.

If $p=\frac12$, then by symmetry the claim follows from Lemma~\ref{lemma_coupling}.

Let $E$ be the set of all positive frogs which are activated by negative frogs, meaning that at the time of their activation at least one negative frog is present. Since $\vmax > 2p-1$ as proved in Lemma~\ref{lemma_vmaxgreaterdrift} and by equation~\eqref{proof_lemma_negative_frogs_1} the set $E$ is finite almost surely.

Hence, $T = \sup_{i \in E}(T^+_i -T_i)$ is an almost surely finite random variable. 
For all $i \in E$ we thus have $T^+_i \leq T_i + T$. Actually, this inequality is true for all $i \in \N_0$, which immediately implies the claim of the lemma. 

The inequality can e.g.~be proven inductively. For $i=0$ the inequality is obviously true as $T_0^+ = T_0=0$. Now assume that $i \in \N$ and $T_j^+ \leq T_j+T$ holds for all $0 \leq j \leq i-1$. If $i \in E$, there is nothing to show. Otherwise, let $0 \leq k \leq i-1$ be the (random) frog that activates the frog $i$ in the normal version of the model. Then we have 
\begin{equation*}
 T_i^+ \leq T_{k}^+ + (T_i-T_{k}) \leq T_i + T.
\end{equation*}
Note here that in both models the frogs follow the same paths, they might just be activated at different times.
\end{proof}

\begin{proof}[Proof of Theorem~\ref{theorem_monotonicity}]
Using a standard coupling of the random variables $(X_k^i)_{i\in \Z,k\in\N}$ we can achieve that $T^+_i(p)$ is monotone decreasing in $p$. As $\vmax(p) = \lim_{n\to\infty} \frac{n}{T^+_n}$ almost surely by Lemma~\ref{lemma_negative_frogs} we conclude that $\vmax(p)$ is increasing in $p$.
\end{proof}

In order to bound the speed of the maximum from above we prove an upper bound for the number of frogs in the maximum. We do this for a slightly modified frog model:
Each time the maximum moves to the left we put a sleeping frog at the site that has just been left by the maximum. Hence, in this new model there is one sleeping frog at every site to the right of the maximum at any time. Further notice that, except at time $0$, there are always at least two frogs in the maximum. We use the same notation as in the usual frog model, but add an index ``$\text{mod}$'' when referring to the modified model.
Further, let $a_n$ denote the number of frogs in the maximum in the modified frog model.

\begin{lemma} \label{lemma_uniform_bound}
For $\frac12 < p < 1$ and all $n \in \N$ it holds that
\begin{equation*}
 \E[a_n] \leq \frac{(2-p)p}{(1-p)(2p-1)}.
\end{equation*}
\end{lemma}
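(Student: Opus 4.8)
The approach is to study the one-step dynamics of the process $(a_n)_{n\geq 0}$ and to extract from it a linear recursion for $\E[a_n]$. Suppose that at time $n$ the maximum of the modified model sits at $M_n$ with $a_n$ frogs on it. In the next step each of these frogs moves right independently with probability $p$. If at least one of them does — an event of probability $1-(1-p)^{a_n}$, since frogs sitting below the maximum cannot cause it to advance — then the maximum moves to $M_n+1$, the sleeping frog there (always present, by construction of the modified model) is woken, and $a_{n+1}=1+K$ with $K\sim\mathrm{Bin}(a_n,p)$ conditioned on $K\geq 1$. This is the crucial mechanism: an advance of the maximum thins its population by the factor $p$ (up to the additive $1$ and the conditioning), and this is what keeps $\E[a_n]$ bounded. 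If instead all $a_n$ frogs step left, the maximum moves one step back and $a_{n+1}\leq a_n+W$, where $W$ counts the frogs sitting just behind $M_n$ that end up on the new maximum.

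From this I would derive a recursion for $\E[a_n]$. Using $\E[(1+K)\1_{\{K\geq 1\}}]=p\,a_n+\bigl(1-(1-p)^{a_n}\bigr)$ on the advance event gives, for $n\geq 1$ (so that $a_n\geq 2$),
\[
\E[a_{n+1}\mid\mathcal{F}_n]\;\leq\;p\,a_n+1+(1-p)^{a_n}\bigl(a_n-1+\E[W\mid\mathcal{F}_n]\bigr).
\]
Since $-\log(1-p)\geq\log 2$ for $p\geq\tfrac12$, a short calculation shows that, once $\E[W]$ is bounded by a constant, the last term is bounded over $a_n\geq 2$ by an explicit function of $p$. This yields $\E[a_{n+1}]\leq p\,\E[a_n]+c(p)$ for a constant $c(p)$ depending only on $p$; together with $\E[a_1]=2$ (at time $1$ the maximum always carries two frogs, the original one and a freshly woken one) the recursion gives $\E[a_n]\leq\tfrac{c(p)}{1-p}$ uniformly in $n$, and it then remains to check that $c(p)\leq\frac{(2-p)p}{2p-1}$, which reproduces the claimed bound.

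The main obstacle is the control of the pile-up term $W$: when the maximum retreats it re-collects frogs it had previously shed behind it, and a priori nothing bounds their number. The plan is to exploit that in the modified model the maximum has strictly positive speed — at least $2p-1$, since $\vmax>2p-1$ by Lemma~\ref{lemma_vmaxgreaterdrift} and the modification only pushes the maximum further to the right — so that it retreats rarely and never by much: a frog shed at distance $d$ behind the current maximum is re-collected only if the maximum later comes back down by $d$, an event whose probability is exponentially small in $d$. Summing the expected number of frogs shed at each distance against these return probabilities produces a convergent geometric series, hence a bound on $\E[W]$ uniform in $n$; this is where the factor $2p-1$ enters the final constant. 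Alternatively one can avoid estimating $W$ directly by stochastically dominating $(a_n)_{n\geq 0}$ by an ergodic Markov chain on $\{2,3,\dots\}$ whose advance transitions send $k$ to $1+K$ with $K\sim\mathrm{Bin}(k,p)$ conditioned on $K\geq 1$, and whose retreat transitions are a suitable explicit envelope, and then bounding the mean of its stationary law; either way the heart of the matter is the same geometric estimate.

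Finally I would assemble the pieces: insert the bound on $\E[W]$ into the one-step inequality, minimise the resulting elementary function over $a_n\geq 2$, solve the linear recursion with $\E[a_1]=2$, and simplify to $\frac{(2-p)p}{(1-p)(2p-1)}$. I expect the geometric control of the re-collected frogs to be the only genuinely delicate point; the remaining steps are bookkeeping and one-variable estimates.
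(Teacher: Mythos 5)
Your one-step analysis of the advance event is exactly right and matches the paper's computation: conditioned on at least one of the $a_n$ frogs stepping right, the new maximum carries $1+K$ frogs with $K\sim\mathrm{Bin}(a_n,p)$ conditioned on $K\geq 1$, and $\E\bigl[(1+K)\1_{\{K\geq 1\}}\bigr]=p\,a_n+1-(1-p)^{a_n}$. The gap is in the retreat term. By parity all active frogs sit on sites of the same parity, so on the retreat event the new maximum $M_n^{\mathrm{mod}}-1$ receives the $a_n$ frogs stepping down plus $W\sim\mathrm{Bin}(a_n(1),p)$ frogs stepping up from $M_n^{\mathrm{mod}}-2$, where $a_n(1)$ is the occupancy of $M_n^{\mathrm{mod}}-2$. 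Thus bounding $\E[W]$ \emph{is} bounding $\E[a_n(1)]$, and by the same token $a_n(1)$ is fed by $a_n(2)$, and so on: the quantity you call ``the only genuinely delicate point'' is not a side estimate but a statement of exactly the same type as the lemma itself, at every level of the profile behind the front. Your proposed route to it --- a frog shed at distance $d$ is re-collected only if the maximum later comes back down by $d$ --- is not correct as stated (the shed frog keeps moving and can close the gap from below without the maximum retreating that far), and, more importantly, the ``expected number of frogs shed at each distance'' against which you want to sum is itself unbounded a priori: frogs accumulate behind the moving front over time and the modified model keeps inserting fresh sleeping frogs, so without a uniform-in-$n$ occupancy bound the geometric series has no finite terms to sum. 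The alternative via a dominating Markov chain founders on the same point: there is no explicit envelope for the retreat transition until $a_n(1)$ is controlled.

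The paper closes precisely this circle by making the whole profile part of the induction: with $a_n(k)$ the number of frogs at $M_n^{\mathrm{mod}}-2k$, it proves simultaneously for all $k\geq 0$, by induction on $n$, that $\E[a_n(k)]\leq \frac{(2-p)p}{(1-p)(2p-1)}\,p^{-k}$. The one-step recursions couple level $k$ to levels $k\pm 1$, with the dangerous upward term $a_n(k+1)$ damped by the retreat probability $(1-p)^{a_n}\leq(1-p)^2$, and the geometric ansatz $p^{-k}$ is exactly what makes the induction step close. If you want to salvage your linear recursion $\E[a_{n+1}]\leq p\,\E[a_n]+c(p)$, you still need this simultaneous control of $\E[a_n(1)]$, so the profile induction (or an equivalent) cannot be avoided; as written, your argument has a genuine gap at its central step.
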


\begin{proof}
We prove bounds not only for the number of frogs in the maximum, but for every other site as well. Therefore, let $a_n(k)$ be the number of frogs at location $M_n^{\text{mod}}-2k$ for $k,n \in \N_0$. 
We prove by induction over $n$ that for all $n,k \in \N_0$
\begin{equation}\label{proof_uniform_bound_1}
 \E[a_n(k)] \leq \frac{(2-p)p}{(1-p)(2p-1)p^k}.
\end{equation}
For $n=0$ and $n=1$ one easily checks that the claim is true. Assume that the claim holds for some integer $n \in \N$.

First we show inequality~\eqref{proof_uniform_bound_1} for $k=0$. Distinguishing whether all $a_n$ particles in the maximum at time $n$ move to the left or not in the next step one calculates
\begin{align*}
 \E[a_{n+1}] &= \E\bigl[(1-p)^{a_{n}} \bigl(a_n+p a_{n}(1)\bigr) \bigr]\\
             & \quad + \E \Bigl[\bigl(1-(1-p)^{a_n}\bigr) \Bigl( \frac{p a_n}{1 -(1-p)^{a_n}} + 1 \Bigr) \Bigr]\\
             &= \E\bigl[(1-p)^{a_{n}} \bigl(a_n+p a_{n}(1) -1\bigr) +p a_n + 1 \bigr].
\end{align*}             
Note here that the expectation of a binomial random variable with para\-meters $p > 0$ and $k \in \N$ conditioned on being at least $1$ is given by $\frac{p k}{1 -(1-p)^{k}}$. 
Using $a_n \geq 2$ yields
\begin{equation} \label{proof_uniform_bound_2}
 \E[a_{n+1}] \leq (1-p)^{2}\E\bigl[a_n+p a_{n}(1) -1 \bigr] +p \E[a_n] + 1.           
\end{equation}

Inserting the induction hypothesis \eqref{proof_uniform_bound_1} in \eqref{proof_uniform_bound_2} the claim follows after a straightforward calculation.

For $k = 1$  an analogous calculation yields
\begin{align}
 \E[a_{n+1}(1)] &= \E\bigl[(1-p)^{a_{n}} \bigl(pa_n(2) + (1-p) a_{n}(1)\bigr) \bigr] \nonumber\\
                & \quad + \E\Bigl[\bigl(1-(1-p)^{a_n} \bigr) \Bigl(a_n - \frac{pa_n}{1-(1-p)^{a_n}} +p a_n(1) \Bigr) \Bigr] \nonumber \\
                &=  \E\bigl[(1-p)^{a_{n}} \bigl(pa_n(2) - (2p-1) a_{n}(1) -a_n\bigr) \bigr] \nonumber\\
                & \quad + \E\bigl[(1-p)a_n + pa_n(1) \bigr]. \label{proof_uniform_bound_3}
\intertext{For $k \geq 2$ one gets}
 \E[a_{n+1}(k)] &= \E\bigl[(1-p)^{a_{n}} \bigl(pa_n(k+1) + (1-p) a_{n}(k)\bigr) \bigr] \nonumber\\
                & \quad + \E\bigl[(1-(1-p)^{a_n}) \bigl(p a_n(k) + (1-p)a_n(k-1) \bigr) \bigr] \nonumber \\
                &=  \E\bigl[(1-p)^{a_{n}} \bigl(pa_n(k+1) - (2p-1) a_{n}(k) -(1-p)a_n(k-1)\bigr) \bigr] \nonumber\\
                & \quad + \E\bigl[(1-p)a_n(k-1) + pa_n(k) \bigr]. \label{proof_uniform_bound_4}
\intertext{Thus, for $k \geq 1$ equations~\eqref{proof_uniform_bound_3} and~\eqref{proof_uniform_bound_4} imply}
 \E[a_{n+1}(k)] &\leq p(1-p)^{2} \E[a_n(k+1)] + p \E[a_n(k)] + (1-p)\E[a_n(k-1)]. \label{proof_uniform_bound_5}
\end{align}
As before, inserting the induction hypothesis \eqref{proof_uniform_bound_1} into inequality~\eqref{proof_uniform_bound_5} completes the proof.
\end{proof}


\begin{proof}[Proof of Theorem~\ref{theorem_vmax_smaller_one}]
Consider the event that in the modified frog model at time $n$ all $a_n$ frogs sitting in the maximum move to the left. Using Jensen's inequality and Lemma~\ref{lemma_uniform_bound}, we conclude that the probability of this event is bounded from below by
\begin{equation*}
 \E\bigl[(1-p)^{a_n}\bigr] \geq (1-p)^{\E[a_n]} \geq (1-p)^{\frac{(2-p)p}{(1-p)(2p-1)}}.
\end{equation*}
Therefore, for all $n \in \N$
\begin{align*}
  \E\bigl[T_{n+1}^{\text{mod}}-T_n^{\text{mod}}\bigr] & \geq  1+ 2 \, \E\bigl[(1-p)^{a_n}\bigr]\\
                                                      & \geq  1 + 2 (1-p)^{\frac{(2-p)p}{(1-p)(2p-1)}}.
\end{align*}
Clearly, in the modified model, frogs are activated no later than in the normal version of the frog model. Thus, 
\begin{equation*}
 \E[T_n] \geq \E\bigl[T_n^{\text{mod}}\bigr] = \sum_{k=1}^n \E\bigl[T_k^{\text{mod}} - T_{k-1}^{\text{mod}}\bigr] \geq \Bigl( 1 + 2 (1-p)^{\frac{(2-p)p}{(1-p)(2p-1)}}\Bigr) n.
\end{equation*}
By Lemma~\ref{lemma_speed_max} we conclude
\begin{equation*}
 \vmax^{-1} = \inf_{n \in \N} \frac{\E[T_n]}{n} \geq  1 + 2 (1-p)^{\frac{(2-p)p}{(1-p)(2p-1)}} >1. \qedhere
\end{equation*}

\end{proof}


It remains to prove Theorem~\ref{theorem_equidistribution}. The idea of the proof is quite simple: From the point of view of the minimum the front moves with a positive speed, but all the frogs only fluctuate around their locations with $\sqrt{n}$, so basically they stay where they are.

First, we show that for large enough times $n$ all active frogs do not deviate much from their expected locations. To formalize this statement we define the set $G_n = \{i \in A_n\colon \bigl|Z_n^i - \E [Z_n^i]\bigr| < n^{3/4}\}$.

\begin{lemma} \label{lemma_equidistribution_1}
Almost surely, $G_n=A_n$ for all $n$ large enough.
\end{lemma}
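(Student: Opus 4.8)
The plan is to show that the "bad" event---that some active frog $i$ has deviated from its expectation by at least $n^{3/4}$ at time $n$---happens only finitely often, by a Borel--Cantelli argument. The first thing to note is that if $i \in A_n$ then $T_i \le n$, so the frog has been moving for some time $n - T_i \le n$, and $Z_n^i - \E[Z_n^i]$ is (conditionally on $T_i$) a centered sum of at most $n$ i.i.d.\ bounded increments $X_k^i$. Moreover, only frogs with $|i|$ not too large can be active by time $n$: since all frogs move at speed at most $1$, we have $A_n \subseteq \{-n, \ldots, n\}$ (indeed $A_n^+ \subseteq \{-n,\dots,n\}$ was already used in the proof of Lemma~\ref{lemma_coupling}, and the same bound holds on the negative side). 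Hence the union bound only ranges over $O(n)$ frogs.

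Next I would make the fluctuation estimate precise and uniform. For a fixed frog $i$, conditionally on its activation time $T_i = t$ (which is independent of the trajectory increments $(X_k^i)_{k \ge 1}$ used after activation, as observed repeatedly in the excerpt), the displacement since activation is $S_{n-t}^i$, a sum of $n - t \le n$ centered bounded steps, and $\E[Z_n^i] = i + (2p-1)(n - T_i)$ in expectation over nothing further is awkward because $T_i$ is random; cleaner is to write $Z_n^i - \E[Z_n^i \mid \mathcal{F}_{T_i}] = S_{n-T_i}^i - (2p-1)(n - T_i)$ and bound this. By Hoeffding's inequality (or the Chernoff bound underlying Cramér's theorem, already invoked in Lemma~\ref{lemma_indrw}), for any $m \le n$,
\begin{equation*}
\P\bigl(|S_m^0 - (2p-1)m| \ge n^{3/4}\bigr) \le 2\exp\bigl(-c\, n^{3/2}/m\bigr) \le 2\exp(-c\, n^{1/2}),
\end{equation*}
for a constant $c > 0$, using $m \le n$. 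One then has to pass from $\E[Z_n^i \mid \mathcal F_{T_i}]$ to $\E[Z_n^i]$; since $T_i$ has finite expectation and good tail bounds (from Lemma~\ref{lemma_speed_max} and large deviations for $T_i$), the difference between $\E[Z_n^i]$ and $i + (2p-1)(n - \E[T_i])$ is $O(1)$, easily absorbed into $n^{3/4}$ for large $n$. Summing the bound over the $\le 2n+1$ relevant frogs gives $\P(G_n \ne A_n) \le (2n+1)\cdot 2\exp(-c' n^{1/2})$, which is summable in $n$; Borel--Cantelli then finishes the proof.

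The main obstacle, and the place requiring the most care, is the conditioning issue: $T_i$ is itself a complicated random variable depending on the whole history of earlier frogs, so one cannot treat $Z_n^i$ as a plain random walk. The clean way around this is to condition on $\mathcal{F}_{T_i}$ (or to use the alternative synchronous coupling $\widetilde Z_n^i$ from the proof of Lemma~\ref{lemma_coupling}, for which $\widetilde Z_n^i - i = S_n^i - S_{T_i}^i$ and the deviation is controlled by $|S_n^i - (2p-1)n|$ together with $|S_{T_i}^i - (2p-1)T_i|$, each handled by the large-deviation bound uniformly in the random index); either way the key point is that the increments after activation are independent of $T_i$, so the exponential tail bound applies conditionally and hence unconditionally. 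A secondary, purely bookkeeping, point is to confirm that $\E[Z_n^i]$ differs from the "frozen-start" expectation by only a bounded amount, which follows from $\sup_i \E[T_i^+] < \infty$-type estimates implicit in Lemma~\ref{lemma_speed_max}; once these two points are settled the summability and Borel--Cantelli step is routine.
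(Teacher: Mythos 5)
Your proposal follows essentially the same route as the paper's proof: a union bound over $A_n \subseteq \{-n,\dots,n\}$, conditioning on the activation time $T_i$ so that the post-activation displacement is an honest random walk of at most $n$ steps, Hoeffding's inequality giving a bound of order $n\exp(-cn^{1/2})$, and Borel--Cantelli. The conditional-versus-unconditional centering issue you flag is genuine but is passed over silently in the paper as well (its displayed identity implicitly replaces $\E[Z_n^i]$ by $\E[Z_n^i \mid T_i=k]$), so your treatment is at least as careful as the original.
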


\begin{proof}
As $A_n \subseteq \{-n, \ldots, n\}$ we have
\begin{align}\label{proof_uni_distr_1}
\P \bigl( A_n \not = G_n \bigr)
			&=    \P \Bigl(\bigcup_{i \in A_n} \bigl\{\bigl|Z_n^i-\E [Z_n^i]\bigr| \geq n^{3/4}\bigr\}\Bigr) \nonumber \\
			&\leq \sum_{i=-n}^n \P\bigl(\bigl|Z_n^i-\E[Z_n^i]\bigr| \geq n^{3/4}\bigr) \nonumber \\
			&=    \sum_{i=-n}^n \sum_{k=0}^n \P\bigl(\bigl|Z_n^i-\E[Z_n^i]\bigr| \geq n^{3/4} \bigl| T_i=k\bigr) \bigr. \cdot \P(T_i=k).
\end{align}
Further, for every $i \in \Z$ and $0 \leq k \leq n$ it holds that
\begin{align*}
\P\bigl(\bigl|Z_n^i-\E[Z_n^i]\bigr| \geq n^{3/4} \bigl| T_i=k\bigr) \bigr.
			&=    \P\bigl(\bigl|S_{n-k}^i-\E[S_{n-k}^i]\bigr| \geq n^{3/4} \bigr) \\
			&\leq 2 \operatorname{exp}\Bigl(-\frac{n^{3/2}}{4(n-k)} \Bigr) \\
			&\leq 2 \operatorname{exp}\Bigl(-\frac{n^{1/2}}{4}  \Bigr).
\end{align*}
In the first inequality in the above estimate we use H{\"o}ffding's inequality.
Thus, \eqref{proof_uni_distr_1} implies
\begin{equation*}
\P \bigl( A_n \not = G_n \bigr) \leq 2 \operatorname{exp}\Bigl(-\frac{n^{1/2}}{4}  \Bigr) \sum_{i=-n}^n \sum_{k=0}^n \P(T_i=k) \leq 2 (2n+1) \operatorname{exp}\Bigl(-\frac{n^{1/2}}{4}  \Bigr)
\end{equation*}
which is summable. An application of the Borel-Cantelli Lemma completes the proof.
\end{proof}

For $\varepsilon >0$ and $x \in [0,1]$ define
\begin{align*}
 L_n(x, \varepsilon) &= 
 \begin{cases}
  \bigl\{i\in \Z\colon -(\vmax-\varepsilon)n \leq i \leq \bigl((2x-1) \vmax - \varepsilon \bigr)n \bigr\} & \text{for $p=\frac12$}, \\
  \bigl\{i\in \Z\colon 0 \leq i \leq (x \vmax - \varepsilon)n \bigr\} & \text{for $p>\frac12$} \\
 \end{cases}\\
 \intertext{and}
 R_n(x,\varepsilon) &= 
 \begin{cases}
  \bigl\{i\in \Z\colon \bigl((2x-1) \vmax + \varepsilon\bigr)n \leq i \leq (\vmax-\varepsilon)n \bigr\} & \text{for $p=\frac12$}, \\
  \bigl\{i\in \Z\colon (x \vmax + \varepsilon)n \leq i \leq  (\vmax-\varepsilon)n \bigr\} & \text{for $p>\frac12$}.
 \end{cases}
\end{align*}

\begin{lemma} \label{lemma_equidistribution_2}
For $n$ large enough, $i \in L_n(x,\varepsilon) \cap G_n$ implies
\begin{equation}\label{lemma_equidistribution_21}
\frac{Z_n^i-\vmin n}{(\vmax-\vmin)n} \leq x,
\end{equation}
whereas $i \in R_n(x,\varepsilon) \cap G_n$ implies
\begin{equation}\label{lemma_equidistribution_22}
\frac{Z_n^i-\vmin n}{(\vmax-\vmin)n} \geq x.
\end{equation} 
\end{lemma}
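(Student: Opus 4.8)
The plan is to show that a ``good'' frog --- one lying in $G_n$ --- with initial site $i$ occupies, up to the $n^{3/4}$ fluctuation that $G_n$ allows, a rescaled position that is an explicit, increasing function of $i$; the two inclusions then drop out by substituting the defining bounds of $L_n(x,\varepsilon)$ and $R_n(x,\varepsilon)$ into that function. Fix $x\in[0,1]$ and $\varepsilon>0$; one may assume $x>0$, since $L_n(0,\varepsilon)=\emptyset$ in both cases and the argument for $R_n$ below never uses $x>0$. The basic input is that after activation frog $i$ follows a random walk of drift $2p-1$ that is independent of $T_i$, so $\E[Z_n^i\mid T_i]=i+(2p-1)(n-T_i)$ on $\{T_i\le n\}$, and hence, exactly as in the proof of Lemma~\ref{lemma_equidistribution_1}, $i\in G_n$ forces
\begin{equation*}
\bigl|\,Z_n^i-i-(2p-1)(n-T_i)\,\bigr|<n^{3/4}.
\end{equation*}

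I would first dispose of the case $p=\tfrac12$, which is easier because the drift term vanishes. Here $\vmin=-\vmax$ by the symmetry of the model under $\Z\to-\Z$, so the rescaled position of $i\in A_n$ is $\frac{Z_n^i+\vmax n}{2\vmax n}$, and the displayed bound reads $|Z_n^i-i|<n^{3/4}$. If moreover $i\in L_n(x,\varepsilon)$, then $i\le\bigl((2x-1)\vmax-\varepsilon\bigr)n$, whence
\begin{equation*}
\frac{Z_n^i+\vmax n}{2\vmax n}<\frac{i+n^{3/4}+\vmax n}{2\vmax n}\le x-\frac{\varepsilon}{2\vmax}+\frac{n^{-1/4}}{2\vmax},
\end{equation*}
which is $\le x$ for all $n$ large, uniformly in $i$; the inclusion for $R_n(x,\varepsilon)$ follows symmetrically.

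For $p>\tfrac12$ I would combine the concentration bound above with the almost sure asymptotics $T_i/i\to\vmax^{-1}$ from Lemma~\ref{lemma_speed_max}: given $\delta>0$ there is an almost surely finite $I_0$ with $\frac{i}{\vmax+\delta}\le T_i\le\frac{i}{\vmax-\delta}$ for all $i\ge I_0$. Writing $\kappa:=\vmax-(2p-1)$, which is strictly positive by Lemma~\ref{lemma_vmaxgreaterdrift}, and using $\vmin=2p-1$ from Theorem~\ref{theorem_speed}, one checks that for $i\in L_n(x,\varepsilon)\cap G_n$ with $i\ge I_0$ the bound $T_i\ge i/(\vmax+\delta)$, together with $i\le(x\vmax-\varepsilon)n$ and the positivity of the coefficient $\frac{\vmax+\delta-(2p-1)}{\vmax+\delta}$, gives
\begin{equation*}
\frac{Z_n^i-\vmin n}{(\vmax-\vmin)n}<\frac{x\vmax-\varepsilon}{\kappa}\cdot\frac{\vmax+\delta-(2p-1)}{\vmax+\delta}+\frac{n^{-1/4}}{\kappa};
\end{equation*}
as $\delta\downarrow0$ the first summand tends to $\frac{x\vmax-\varepsilon}{\vmax}=x-\varepsilon/\vmax$, so choosing $\delta$ small and then $n$ large makes the expression $\le x$. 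The finitely many good frogs with $i<I_0$ are treated separately by the crude bound $Z_n^i<I_0+(2p-1)n+n^{3/4}$, whose rescaled value tends to $0\le x$. For $R_n(x,\varepsilon)$ one argues in the same way: for $n$ large every $i\ge(x\vmax+\varepsilon)n$ exceeds $I_0$, so $T_i\le i/(\vmax-\delta)$, and combined with $i\ge(x\vmax+\varepsilon)n$ this yields a lower bound for $\frac{Z_n^i-\vmin n}{(\vmax-\vmin)n}$ that converges to $x+\varepsilon/\vmax$ as $\delta\downarrow0$, hence is $\ge x$ for $n$ large.

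All the estimates are elementary; the point that needs care is that they must hold uniformly over the range of $i$, which grows with $n$. This is precisely why I route the argument through the almost sure \emph{uniform} convergence $T_i/i\to\vmax^{-1}$ and peel off, as a separate trivial case, the finitely many frogs activated too early for that control to apply. I also rely on the strict inequality $\vmax>2p-1$ from Lemma~\ref{lemma_vmaxgreaterdrift} to keep the coefficient of $i$ strictly positive for small $\delta$, so that substituting the extreme admissible value of $i$ produces the correct one-sided bound; this is the only place where the frog interaction (as opposed to a single walk) enters.
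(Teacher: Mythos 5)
Your proof is correct, and it follows the same basic strategy as the paper's: a good frog sits within $n^{3/4}$ of a centering of the form $i+\vmin\cdot(n-(\text{activation time}))$, the activation time is asymptotically $i/\vmax$, and substituting the defining bounds of $L_n(x,\varepsilon)$ and $R_n(x,\varepsilon)$ (together with the strict inequality $\vmax>2p-1$ of Lemma~\ref{lemma_vmaxgreaterdrift}, which keeps the coefficient of $i$ positive) yields the two inclusions. The one genuine difference is how the activation time is controlled. The paper works with the deterministic quantity $\E[T_i]$, writing $\E[Z_n^i]=i+(n-\E[T_i])\vmin$ and invoking the two halves of \eqref{L1_convergence}: the infimum characterization $\frac{\E[T_i]}{i}\ge\vmax^{-1}$, valid for \emph{every} $i$, for the $L_n$ bound, and $\lim_i\frac{\E[T_i]}{i}=\vmax^{-1}$ for the $R_n$ bound; this makes the threshold on $n$ deterministic and avoids any case split over small $i$. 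You instead center at the conditional expectation $\E[Z_n^i\mid T_i]=i+(2p-1)(n-T_i)$ and use the almost sure convergence $T_i/i\to\vmax^{-1}$, which costs you a random threshold $I_0$ and the extra step of peeling off the finitely many frogs with $i<I_0$ (correctly disposed of, after the legitimate reduction to $x>0$). This is a fair trade: the statement is only ever used inside an almost sure argument in Theorem~\ref{theorem_equidistribution}, so a random ``$n$ large enough'' is harmless, and your choice of centering is in fact the one that matches the H\"offding computation in the proof of Lemma~\ref{lemma_equidistribution_1} defining $G_n$, whereas the paper silently passes from that conditional centering to the unconditional $\E[Z_n^i]$ when it reuses $G_n$ here.
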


\begin{proof}
For $p=\frac12$ note that by symmetry $\vmin={-}\vmax$. Thus, \eqref{lemma_equidistribution_21} holds if and only if $Z_n^i \leq (2x-1)\vmax n$. 
Assume $i \in L_n(x,\varepsilon)\cap G_n$. A straightforward calculation shows
\begin{equation*}
 Z_n^i  \leq \E[Z_n^i] + n^{3/4} 
        =    i + n^{3/4} 
        \leq (2x-1)\vmax n 
\end{equation*}
for $n$ big enough.
Analogously, one shows \eqref{lemma_equidistribution_22} in this case.

For $p>\frac12$ the proof works essentially in the same way as for $p=\frac12$, but the estimation of $\E[Z_n^i]$ is less trivial. We have $\E[Z_n^i] =  i + (n-\E[T_i]) \vmin.$
For $i \in L_n(x,\varepsilon) \cap G_n$ we thus get
\begin{equation*}
 Z_n^i  \leq \E[Z_n^i] + n^{3/4} 
        =    \vmin n + \frac{i}{\vmax} \Bigl( \vmax - \frac{\E[T_i]}{i} \vmin \vmax \Bigr) + n^{3/4}. 
\end{equation*}

Lemma~\ref{lemma_speed_max} yields that $\frac{\E[T_i]}{i} \geq \inf_{i\in \N} \frac{\E[T_i]}{i} = \vmax^{-1}$. 
Hence, for $n$ big enough
\begin{align*}
 Z_n^i & \leq \vmin n + \frac{i}{\vmax} ( \vmax - \vmin) + n^{3/4} \\
       & \leq \vmin n + x ( \vmax - \vmin) n,
\end{align*}
as claimed in \eqref{lemma_equidistribution_21}. On the other hand, $i \in R_n(x,\varepsilon) \cap G_n$ analogously implies
\begin{equation*}
 Z_n^i \geq  \vmin n + \frac{i}{\vmax} \Bigl( \vmax - \frac{\E[T_i]}{i} \vmin \vmax \Bigr) - n^{3/4}. 
\end{equation*}
Since $\lim_{i \to \infty} \frac{\E[T_i]}{i}=\vmax^{-1}$ and $i$ tends to infinity whenever $n$ does by the definition of $R_n(x,\varepsilon)$, we know that $\frac{\E[T_i]}{i} \leq \vmax^{-1}+\delta \varepsilon$ for $n$ big enough and a small constant $\delta$. Therefore,
\begin{equation*}
 Z_n^i \geq \vmin n + \frac{i}{\vmax} (\vmax - \vmin - \varepsilon \delta \vmin \vmax) - n^{3/4}.
\end{equation*}
Using $i \geq (x \vmax + \varepsilon)n$ and choosing $\delta$ small enough finishes the proof.
\end{proof}

\begin{proof}[Proof of Theorem~\ref{theorem_equidistribution}]
We need to show that $\limn \mu_n([0,x]) = \lambda([0,x])$ for every $x \in [0,1]$ almost surely.

Take a realisation of the frog model such that $A_n = G_n$ holds for sufficiently large $n$, that $\lim_{n\to \infty} \frac{M_n}{n}= \vmax$ and $\lim_{n\to \infty} \frac{m_n}{n}= \vmin$, and finally that $A_n \cap \Z^-$ is finite. This happens with probability $1$ as we have seen in Lemma~\ref{lemma_equidistribution_1}, Lemma~\ref{lemma_speed_max}, Theorem~\ref{theorem_speed} and previous discussions about the transience of the frog model.
Now fix $x \in [0,1]$ and $\varepsilon >0$ small.
Lemma~\ref{lemma_equidistribution_2} yields that, for $n$ large enough,
\begin{equation}\label{proof_equidistribution_1} 
\mu_n([0,x]) \geq \frac{1}{|A_n|} |G_n \cap L_n(x,\varepsilon)| = \frac{n}{|A_n|} \cdot \frac{|L_n(x,\varepsilon)|}{n}.
\end{equation}
For the last equation we used that $L_n(x,\varepsilon) \subseteq A_n$ for sufficiently large $n$ as $\limn \frac{M_n}{n} = \vmax$.
The definition of $L_n(x,\varepsilon)$ implies
\begin{equation*}
 |L_n(x,\varepsilon)| \geq 
 \begin{cases}
   2(x\vmax - \varepsilon) n & \text{for $p= \frac12$},\\
   (x\vmax - \varepsilon) n  & \text{for $p> \frac12$}.
 \end{cases}
\end{equation*}
Further, $\limn \frac{n}{|A_n|} = \frac12 \vmax^{-1}$ for $p=\frac12$, respectively $\limn \frac{n}{|A_n|} = \vmax^{-1}$ for $p>\frac12$. Thus, the limit inferior of the last term in \eqref{proof_equidistribution_1} as $n \to \infty$ is bounded from below by $x-\varepsilon \vmax^{-1}$. Since $\varepsilon >0$ was chosen arbitrarily we conclude
\begin{equation*}
\liminf_{n\to\infty}\mu_n([0,x]) \geq x.
\end{equation*}

On the other hand, Lemma~\ref{lemma_equidistribution_2} shows that, for $n$ large enough,
\begin{equation} \label{proof_equidistribution_2}
\mu_n([0,x]) \leq \frac{1}{|A_n|} \bigl|A_n \setminus \bigl(G_n \cap R_n(x,\varepsilon)\bigr)\bigr| 
             = 1 -  \frac{n}{|A_n|} \cdot \frac{|R_n(x,\varepsilon)|}{n} 
\end{equation}
since $A_n = G_n$ and $R_n(x,\varepsilon) \subseteq A_n$ for $n$ big enough. By the definition of $R_n(x,\varepsilon)$ we have
\begin{equation*}
 |R_n(x,\varepsilon)| \geq 
 \begin{cases}
   2\bigl((1-x)\vmax-\varepsilon\bigr)n                      & \text{for $p= \frac12$},\\
   \bigl((1-x)\vmax -2 \varepsilon \bigr)n                   & \text{for $p> \frac12$}.
 \end{cases}
\end{equation*}
Analogous to the above estimation this yields that the limit superior of the right hand side of \eqref{proof_equidistribution_2} is bounded from above by $x+ 2\varepsilon \vmax^{-1}$. As before we get, since $\varepsilon >0$ is arbitrary,
\begin{equation*}
\limsup_{n\to\infty}\mu_n([0,x]) \leq x,
\end{equation*}
which finishes the proof.
\end{proof}


\section{Open Problem}
Simulations suggest that the speed of the maximum is a concave function in the drift parameter $p$. 

\begin{figure}[h]
\centering
\begin{tikzpicture}[scale=1]
 \begin{axis}[
    axis y line =box, 
    axis x line =box, 
    xtickmin=0.5, 
    xlabel=$p$,
    ylabel=$\frac{M_n}{n}$,
    every axis y label/.style={at={(ticklabel cs:0.5)},rotate=0,anchor=near ticklabel},
    xmin=0.5,
    xmax=1,
    ymin=0.48,
    ymax=1,
    enlarge y limits ={value=0.05,upper},
   ]
    
\addplot[only marks, mark size=1pt] coordinates {(0.5,0.5742) (0.51,0.6053) (0.52,0.6391)( 0.53,0.6643)( 0.54,0.6885)( 0.55,0.7178)( 0.56,0.7433)( 0.57,0.7601)( 0.58,0.7804)( 0.59,0.8003)( 0.60,0.8196)( 0.61,0.8384)( 0.62,0.8513)( 0.63,0.8674)( 0.64,0.8837)( 0.65,0.8969)( 0.66,0.9089)( 0.67,0.9187)( 0.68,0.9305)( 0.69,0.9394)( 0.70,0.9473)( 0.71,0.9554)( 0.72,0.9614)( 0.73,0.9684)( 0.74,0.9746)( 0.75,0.9788)( 0.76,0.9844)( 0.77,0.9876)( 0.78,0.9904)( 0.79,0.9931)( 0.80,0.9954)( 0.81,0.9969)( 0.82,0.9977)( 0.83,0.9987)( 0.84,0.9992)( 0.85,0.9996)( 0.86,0.9999)( 0.87,0.9999)( 0.88,1.0000)( 0.89,1.0000)( 0.90,1.0000)( 0.91,1.0000)( 0.92,1.0000)( 0.93,1.0000)( 0.94,1.0000)( 0.95,1.0000)( 0.96,1.0000)( 0.97,1.0000)( 0.98,1.0000)( 0.99,1.0000)( 1,1.0000)};    

\end{axis}
\end{tikzpicture}
\caption{Simulation of $\frac{M_n}{n}$ for $n=100000$}
\end{figure}
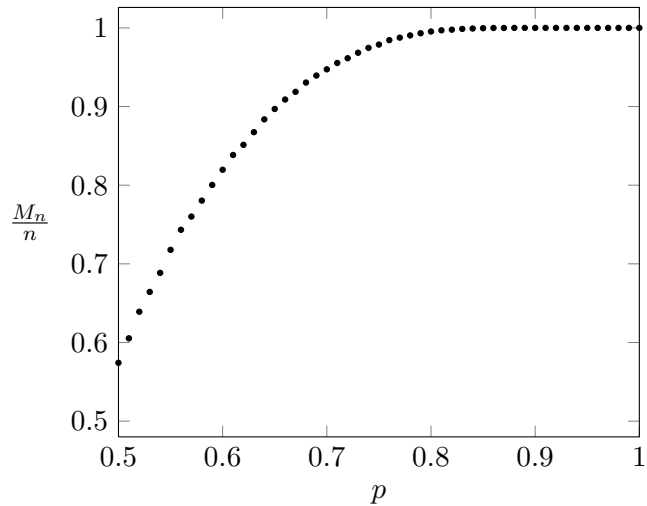

A heuristic argument might be the following.
We expect that the number of frogs in the maximum converges to a stationary distribution $\alpha_p$ for $p<1$. Therefore, the speed of the maximum should equal
\begin{equation*}
 \sum_{k \in \N} \alpha_p(k)\bigl(1-2(1-p)^{k}\bigr).
\end{equation*}
If $\alpha_p$ was independent of $p$, this would be a concave function. However, we believe that the dependence on $p$ does not destroy the concavity.

\begin{con}
The speed of the maximum is a concave function in $p$.  
\end{con}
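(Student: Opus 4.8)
The plan is to turn the heuristic above into a theorem by a two-step scheme: first produce the stationary law $\alpha_p$ of the number of frogs in the front together with the exact formula $\vmax(p)=1-2\,\E_{\alpha_p}[(1-p)^{K}]$, and then run a perturbation analysis in $p$. I would work with the modified frog model of Lemma~\ref{lemma_uniform_bound} and study the \emph{environment seen from the maximum}, namely the Markov chain $\eta_n=(a_n(0),a_n(1),a_n(2),\dots)$ recording, for each $k\ge0$, the number of frogs at $M_n^{\mathrm{mod}}-2k$. By construction $\eta_n(0)\ge2$ for $n\ge1$, and $M_{n+1}^{\mathrm{mod}}-M_n^{\mathrm{mod}}$ equals $+1$ unless every one of the $\eta_n(0)$ frogs in the maximum jumps left, which has conditional probability $(1-p)^{\eta_n(0)}$.

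\textbf{Step 1: a stationary front.} First I would show that $(\eta_n)$ admits a unique stationary distribution $\pi_p$ and is ergodic. Existence of some stationary law is the soft part: Lemma~\ref{lemma_uniform_bound} bounds $\E[a_n(k)]$ uniformly in $n$ and geometrically in $k$, so $(\mathrm{law}(\eta_n))_n$ is tight and every Cesàro subsequential limit is stationary. Uniqueness and convergence is the first real task; the natural route is a regeneration argument — declare a renewal at every step which is a left-move of the front that leaves exactly two frogs in the new maximum and a prescribed finite pattern immediately behind it. Given the current (tight) state such an event has probability bounded below, so the renewal times have finite mean, and a law of large numbers follows. Write $\alpha_p$ for the $\eta(0)$-marginal of $\pi_p$; a byproduct one would want here, strengthening Lemma~\ref{lemma_uniform_bound}, is an exponential tail $\alpha_p(k)\le C\rho^{k}$ with $C,\rho$ \emph{uniform} for $p$ in compact subsets of $(\tfrac12,1)$.

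\textbf{Step 2: the speed formula and reduction.} Applying the ergodic theorem to the increments of $M_n^{\mathrm{mod}}$ gives
\[
 \vmax(p)\;=\;\lim_{n\to\infty}\frac{M_n^{\mathrm{mod}}}{n}\;=\;1-2\,\E_{\alpha_p}\!\bigl[(1-p)^{K}\bigr]\;=:\;1-2\,h(p),
\]
where $K$ has law $\alpha_p$ (one also has to verify the preliminary fact $\lim_n M_n^{\mathrm{mod}}/n=\vmax$, i.e.\ that the extra sleeping frogs — which always lie strictly behind the front — do not alter the speed; this should follow from a comparison with the unmodified model in the spirit of Lemma~\ref{lemma_negative_frogs}). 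Concavity of $\vmax$ is then equivalent to convexity of $h$ on $(\tfrac12,1)$. If $\alpha_p$ were independent of $p$, then $h(p)=\E[(1-p)^{K}]$ is a mixture of the convex functions $p\mapsto(1-p)^{k}$, hence convex — this is exactly the stated heuristic. To incorporate the dependence, fix $p_0$ and write
\[
 h(p)\;=\;\E_{\alpha_{p_0}}\!\bigl[(1-p)^{K}\bigr]\;+\;\Bigl(\E_{\alpha_{p}}-\E_{\alpha_{p_0}}\Bigr)\!\bigl[(1-p)^{K}\bigr];
\]
the first term has strictly positive curvature, so it remains to show that the second term is $C^{2}$ with a second derivative dominated by that curvature on compacts. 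For this I would use a perturbation formula for the stationary law of the front chain, $\partial_p\alpha_p=\alpha_p\,(\mathrm{Id}-P_p+\Pi_p)^{-1}(\partial_pP_p)$ coming from its resolvent, or equivalently a coupling of the front chains at $p$ and $p+\delta$; either way one needs a spectral gap for $(\eta_n)$ that is uniform in $p$ on compact subsets of $(\tfrac12,1)$ so that the series converges and $h\in C^{2}$.

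\textbf{Main obstacle.} The hard part is everything in Step~2 beyond the formula, together with the uniform control of the front chain it rests on. As noted in the discussion preceding Lemma~\ref{lemma_negative_frogs}, there is no monotone coupling of the frog dynamics across different $p$, and for the environment seen from the front this is compounded: a larger drift both makes left-moves of the front rarer (so fewer frogs accumulate) and lets the front outrun the cloud (again fewer frogs), so even the sign of $\partial_p\alpha_p$ in a stochastic order is unclear, and Lemma~\ref{lemma_uniform_bound} gives only a first-moment bound. Making precise the belief that ``the dependence on $p$ does not destroy concavity'' therefore amounts to obtaining quantitative, $p$-uniform mixing and tail estimates for the front chain — and that, rather than the final convexity computation, seems to be the genuine content of the conjecture.
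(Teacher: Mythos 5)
The statement you are addressing is an open \emph{conjecture}: the paper offers no proof, only the heuristic that $\vmax$ ``should equal'' $\sum_k \alpha_p(k)(1-2(1-p)^k)$ for a hypothetical stationary front distribution $\alpha_p$, together with the admission that the $p$-dependence of $\alpha_p$ is the unresolved issue. Your proposal is essentially that same heuristic written out as a program, and to your credit you locate the difficulty in the right place. But it is not a proof, and the decisive step is missing rather than merely technical: after splitting $h(p)=\E_{\alpha_{p_0}}[(1-p)^K]+(\E_{\alpha_p}-\E_{\alpha_{p_0}})[(1-p)^K]$, you need the second derivative of the correction term to be dominated, uniformly near $p_0$, by the curvature $\E_{\alpha_{p_0}}[K(K-1)(1-p)^{K-2}]$ of the leading term. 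Nothing in your argument produces such a bound or even a sign for $\partial_p\alpha_p$ in any stochastic order; a spectral gap and a resolvent formula would give you that $h\in C^2$, but $C^2$ regularity of the error term says nothing about its size relative to the main term. Since the first-order term $\partial_p\alpha_p$ contributes to $h''$ at the same order as the explicit curvature, the conjecture is exactly the assertion that this competition resolves favourably, and your scheme restates that assertion rather than establishing it.

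There are also gaps earlier in the chain that are not routine. First, the environment seen from the front is an infinite-dimensional chain, and Lemma~\ref{lemma_uniform_bound} controls only first moments of the marginals $a_n(k)$; your regeneration argument needs the chain to hit a fixed finite pattern for the \emph{entire} configuration behind the front with probability bounded below, which requires control of the full tail, not of $\E[a_n(k)]$. Second, the identity $\lim_n M_n^{\mathrm{mod}}/n=\vmax$ is not available: the paper only shows $T_n\ge T_n^{\mathrm{mod}}$, i.e.\ the modified front is at least as fast, and the extra sleeping frogs deposited behind the front could in principle strictly increase the speed. Lemma~\ref{lemma_negative_frogs} does not transfer, because there the ignored frogs provably travel strictly slower than the front (speed $2p-1<\vmax$), whereas here the added frogs sit immediately behind the front and can rejoin it. If $\vmax^{\mathrm{mod}}>\vmax$, concavity of the modified speed would not yield the conjecture. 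In short: your proposal is a sensible formalisation of the authors' heuristic and a fair map of the obstacles, but every genuinely hard step --- uniqueness and uniform mixing for the front chain, identification of the modified speed with $\vmax$, and above all the curvature-domination inequality --- is left open, so the conjecture remains unproved.
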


\paragraph{Acknowledgments} We would like to thank Nina Gantert for introducing us to this problem and for very useful discussions.
Further, we are grateful to Noam Berger for helpful comments and to the unknown referee for several suggestions that helped to improve the readability of the paper.

\bibliographystyle{amsplain}
\bibliography{frogs}

\Addresses

\end{document}